\newcommand{\mathsym}[1]{{}}
\newcommand{\unicode}[1]{{}}
\newtheorem{thm}{Theorem}[section]
\newtheorem{cor}[thm]{Corollary}
\newtheorem{lem}[thm]{Lemma}
\newtheorem{prop}[thm]{Proposition}
\theoremstyle{definition}
\newtheorem{notation}[thm]{Notation}
\newtheorem{defn}[thm]{Definition}
\newtheorem{rem}[thm]{Remark}
\newtheorem*{defn*}{Definition}
\newtheorem*{rems*}{Remarks}
\newtheorem*{rem*}{Remark}
\newtheorem{ex}[thm]{Example}
\newtheorem*{Def}{Definition}
\numberwithin{equation}{section}
\begin{document}

\title [Singularities of affine equidistants] {Singularities of affine equidistants:\\ extrinsic geometry of surfaces in 4-space.}

\author[W. Domitrz]{W. Domitrz}
\address{Warsaw University of Technology, Faculty of Mathematics and Information Science, Plac Politechniki 1, 00-661 Warszawa, Poland}
\email{domitrz@mini.pw.edu.pl}
\author[S. Janeczko]{S. Janeczko}
\address{Warsaw University of Technology, Faculty of Mathematics and Information Science, Plac Politechniki 1, 00-661 Warszawa, Poland}
\email{janeczko@mini.pw.edu.pl}
\author[P. de M. Rios]{P. de M. Rios}
\address{Departamento de Matem\'atica, ICMC, Universidade de S\~ao Paulo; S\~ao Carlos, SP, 13560-970, Brazil}
\email{prios@icmc.usp.br}
\author[M. A. S. Ruas]{M. A. S. Ruas}
\address{Departamento de Matem\'atica, ICMC, Universidade de S\~ao Paulo; S\~ao Carlos, SP, 13560-970, Brazil}
\email{maasruas@icmc.usp.br}

\thanks{W. Domitrz and S. Janeczko were partially supported by NCN grant no. DEC-2013/11/B/ST1/03080. P. de M. Rios was partially supported by FAPESP grant no. 2013/04630-9. M. A. S. Ruas was partially supported by FAPESP grant no. 2014/00304-2 and CNPq grant no. 305651/2011-0.}



\maketitle

\begin{abstract}

For a generic embedding of a smooth closed surface $M$ into
$\mathbb R^4,$ the subset of $\mathbb R^4$ which is the affine $\lambda-$equidistant of $M$   appears as the discriminant set of a stable mapping $M \times M \to \mathbb R^4,$ hence their stable singularities are $A_k, \, k=2, 3, 4,$ and $C_{2,2}^{\pm}.$ In this paper, we  characterize these stable singularities of $\lambda-$equidistants in terms of the bi-local extrinsic geometry of the surface, leading to a geometrical study of the set of weakly parallel points on $M$.

\end{abstract}

\section{Introduction}

When $M$ is a smooth closed curve on the affine plane $\mathbb R^2$, the set of all midpoints of chords connecting pairs of points on $M$ with parallel tangent vectors is called the {\it Wigner caustic} of $M$, or the {\it area evolute} of $M$, or still, the {\it affine $1/2$-equidistant} of $M$.
The
$1/2$-equidistant is generalized to any $\lambda$-equidistant, denoted $E_{\lambda}(M)$, $\lambda\in\mathbb R$, by considering all chords  connecting pairs of points of $M$ with parallel tangent vectors and the set of all points of these chords  which stand in the $\lambda$-proportion to their corresponding pair of points on $M$.

The definition of the affine $\lambda$-equidistant of $M$ is generalized to the cases when $M$ is an $n$-dimensional closed submanifold of $\mathbb R^q$, with
$q\leq 2n$, by considering the set of all $\lambda$-points of chords connecting pairs of points on $M$ whose direct sum of tangent spaces do not coincide with $\mathbb R^q$, the so-called {\it weakly parallel pairs} on $M$. In the particular case  of  $M^2\subset\mathbb R^4$, a weakly parallel pair on the surface $M$ can be either $1$-parallel (when the tangent spaces span a $3$-space) or $2$-parallel, which is the case of true parallelism, also called strong parallelism.

Affine equidistants of smooth submanifolds, in particular the Wigner caustic, have a way in mathematical physics and in the definition of affine-invariant global centre symmetry sets of these submanifolds and, in every case, precise knowledge of their singularities is an important issue \cite{Jan, GZ1, GZ2, DRs, DMR}.
Thus, stable singularities of affine equidistants of $M^n\subset\mathbb R^q$ have been extensively studied  \cite{Ber, GH, GZ1, GZ2, GJ, DRs}, culminating in its complete classification for all pairs $(2n,q)$ of nice dimensions \cite{DRR}.

On the other hand, not so much is known with respect to the interpretation for the realization of these stable singularities in terms of the extrinsic geometry of $M^n\subset\mathbb R^q$. The case of curves on the plane has long been well understood \cite{Ber, GH}, just as for hypersurfaces \cite{GZ1}. Another instance that has been completely worked out refers to  a Lagrangian surface $M^2$ in symplectic $\mathbb R^4$, for its  Wigner caustic on shell, that is, the part of its $1/2$-equidistant that is close to and contains $M$ \cite{DMR}. A geometric study of the Wigner caustic on shell for general surfaces in $\mathbb R^4$ has also been partly worked out in \cite{GJ}.

In this paper, we extend the extrinsic geometric study of the realization of affine equidistants to the case of general (off-shell) $\lambda$-equidistants of any smooth surface $M^2\subset\mathbb R^4$.  Our paper is organized as follows:

First, Section 2 reviews basic definitions and characterizations of affine equidistants. The presentation is based on \cite{DRR}. Then, basic facts on extrinsic geometry of surfaces in $4$-space are recalled in Section 3.

Our geometric study is presented in Sections 4 and 5. First, in Section 4 we describe the realization of singularities of affine equidistants in terms of the bi-local extrinsic geometry of the surface. The main result for the case of $1$-parallel pairs is presented in Theorem  \ref{prop-A_k}, while Theorems \ref{c+} and \ref{c-} present the results for the $2$-parallel case.

Then, this is followed in Section 5 by a complementary study of the set of weakly parallel points on $M$. We start by using the Grassmannian of $2$-planes in $4$-space, cf. Propositions \ref{singW} and \ref{Wpi2} and Theorem \ref{thmgrass}, leading to the final detailed description of the set of weakly parallel points on $M$ presented in Corollary \ref{corgrass} and Theorem \ref{weaklyset}.

\

\noindent{\it Acknowledgements}: This work started while the last two authors (P.R. \& M.R.) were visiting Warsaw and continued during visits of the first two authors (W.D \& S.J.) to S\~ao Carlos. We all thank the respective host institutes for hospitality and the funding agencies (NCN, FAPESP \& CNPq) for financial support for these visits.

\section{Singularities of affine equidistants: overview}\label{revsing}

In this section, we summarize the material that is presented in \cite{DRR} in greater detail, in order to describe, characterize and classify the singularities of affine $\lambda$-equidistants of smooth submanifolds.

\subsection{Definition of affine equidistants}

Let $M$ be a smooth closed $n$-dimensional submanifold of
the affine space $\mathbb R^{q}$, with $q\leq 2n$. Let $\alpha, \beta$ be points of $M$ and denote by
$\tau_{\alpha-\beta}:\mathbb R^q \ni x\mapsto x+(\alpha-\beta) \in \mathbb R^q$ the
translation by the vector $(\alpha-\beta)$.
\begin{defn}\label{parallelism} A pair of points $\alpha, \beta \in M$ ($\alpha\ne \beta$) is called a
{\bf weakly parallel} pair if
$$T_{\alpha}M + \tau_{\alpha-\beta}(T_{\beta}M)\ne \mathbb R^q.$$

A weakly parallel pair $\alpha, \beta \in M$ is
called {\bf $k$-parallel} if
$$\dim(T_{\alpha}M \cap \tau_{\alpha-\beta}(T_{\beta}M))=k.$$
If $k=n$ the pair $\alpha, \beta \in M$ is called {\bf strongly parallel},
or just {\bf parallel}. We also refer to $k$ as the {\bf degree of
parallelism} of the pair $(\alpha,\beta)$.
\end{defn}

\begin{defn}
A {\bf chord} passing through a pair $\alpha,\beta$, is the line
$$
l(\alpha,\beta)=\{x\in \mathbb R^q|x=\lambda \alpha + (1-\lambda) \beta, \lambda \in
\mathbb R\},
$$
but we sometimes also refer to $l(\alpha,\beta)$ as a chord {\it joining} $\alpha$ and $\beta$.
\end{defn}

\begin{defn}\label{defElambda} For a given $\lambda$, an {\bf affine
$\lambda$-equidistant} of $M$, $E_{\lambda}(M)$, is the set of all
$x\in \mathbb R^q$ such that $x=\lambda \alpha + (1-\lambda) \beta$, for
all weakly parallel pairs $(\alpha,\beta)$ in $M$. $E_{\lambda}(M)$ is also
called a {\bf momentary equidistant} of $M$.
Whenever $M$ is understood, we write
$E_{\lambda}$ for $E_{\lambda}(M)$.
\end{defn}
Note that, for any
$\lambda$, $E_{\lambda}(M)=E_{1-\lambda}(M)$ and in particular
$E_0(M)=E_1(M)=M$. Thus, the case $\lambda=1/2$ is special:
\begin{defn}$E_{1/2}(M)$ is called the {\bf Wigner caustic} of $M$  \cite{Ber, OH}.
\end{defn}

\subsection{Characterization of affine equidistants by projection}

Consider the product affine space: $\mathbb R^{q}\times \mathbb R^{q}$ with coordinates $(x_+,x_-)$
and the tangent bundle to $\mathbb R^{q}$:  $T\mathbb
R^{q}=\mathbb R^{q}\times \mathbb R^{q}$ with coordinate system
$(x,\dot{x})$ and standard projection
$\pi: T\mathbb R^{q}\ni (x,\dot{x})\rightarrow x \in \mathbb R^{q}$.

\begin{defn} $\forall \lambda\in\mathbb{R}\setminus  \{0,1\}$, a {\bf $\lambda$-chord transformation}
$$\Gamma_{\lambda}:\mathbb{R}^{q}\times\mathbb{R}^{q}\to T\mathbb{R}^{q} \ , \ (x^+,x^-)\mapsto(x,\dot{x})$$
is a  linear diffeomorphism  defined by:
\begin{equation}\label{x}
x=\lambda x^+ + (1-\lambda)x^- \ , \ \
\dot{x}=x^+ - x^-.
\end{equation}
\end{defn}

\begin{rem} The choice of linear equation for $\dot{x}$ in (\ref{x}) is not unique, but this is the simplest one. Among other possibilities, the choice $\dot{x}=\lambda x^+ - (1-\lambda)x^-$ is particularly well suited for the study of affine equidistants of {\it Lagrangian} submanifolds in symplectic space \cite{DRs}.
\end{rem}

Now, let $M$ be a smooth closed $n$-dimensional submanifold of the
affine space $\mathbb R^{q}$ ($2n\ge q$) and consider the product
$M\times M\subset \mathbb R^{q}\times\mathbb R^{q}$. Let $\mathcal
M_{\lambda}$ denote the image of $M\times M$ by
a  $\lambda$-chord transformation,
$$\mathcal M_{\lambda} = \Gamma_{\lambda}(M\times M) \ ,$$
which is a $2n$-dimensional smooth submanifold of $T\mathbb
R^{q}$.

Then we have the following general characterization:
\begin{thm}[\cite{DRs}]\label{gensing}
The set of critical values of the standard projection $\pi:
T\mathbb R^{q}\to\mathbb R^{q}$ restricted to $\mathcal
M_{\lambda}$ is $E_{\lambda}(M)$.
\end{thm}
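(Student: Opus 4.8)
The plan is to unwind the definitions and reduce the statement to a pointwise condition on tangent spaces of $\mathcal{M}_\lambda$, matching it against the definition of weakly parallel pairs. First I would fix a point $(x_+, x_-) \in M \times M$ and set $(x, \dot{x}) = \Gamma_\lambda(x_+, x_-)$, so that $x = \lambda x_+ + (1-\lambda) x_-$ is precisely the candidate point of $E_\lambda(M)$ associated to the pair $(x_+, x_-)$ when that pair is weakly parallel. The key observation is that $x$ is a critical value of $\pi|_{\mathcal{M}_\lambda}$ if and only if there is some point of $\mathcal{M}_\lambda$ in the fiber $\pi^{-1}(x)$ at which $d\pi$ restricted to the tangent space of $\mathcal{M}_\lambda$ fails to be surjective onto $\mathbb{R}^q$. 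Since $\pi$ is the standard projection $(x,\dot x)\mapsto x$, its differential kills the $\dot{x}$-directions and is the identity on the $x$-directions, so $d\pi|_{T\mathcal{M}_\lambda}$ is onto exactly when the image of $T_{(x_+,x_-)}\mathcal{M}_\lambda$ under the first-coordinate projection is all of $\mathbb{R}^q$.

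Next I would compute $T_{(x_+,x_-)}\mathcal{M}_\lambda$ explicitly. Because $\Gamma_\lambda$ is a linear diffeomorphism, $\mathcal{M}_\lambda = \Gamma_\lambda(M\times M)$ has tangent space $d\Gamma_\lambda\big(T_{x_+}M \oplus T_{x_-}M\big)$; under the formula $(\ref{x})$, a tangent vector $(v_+, v_-)$ with $v_\pm \in T_{x_\pm}M$ maps to $(\lambda v_+ + (1-\lambda) v_-,\, v_+ - v_-)$. Therefore the image of $T_{(x_+,x_-)}\mathcal{M}_\lambda$ under $d\pi$ is the linear subspace
\[
\{\lambda v_+ + (1-\lambda) v_- : v_+ \in T_{x_+}M,\ v_- \in T_{x_-}M\} \subseteq \mathbb{R}^q .
\]
Since $\lambda \neq 0$ and $1-\lambda \neq 0$, rescaling $v_\pm$ shows this subspace equals $T_{x_+}M + T_{x_-}M$ (viewed as subspaces of the vector space $\mathbb{R}^q$), which after translating the tangent plane at $x_-$ appropriately is exactly $T_{x_+}M + \tau_{x_+ - x_-}(T_{x_-}M)$ in the notation of Definition \ref{parallelism}. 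Hence $x$ fails to be a regular value precisely when this sum is a proper subspace of $\mathbb{R}^q$, i.e. precisely when $(x_+, x_-)$ is a weakly parallel pair (the case $x_+ = x_-$ is harmless as it corresponds to the diagonal, where the pair is trivially "parallel" and $x \in M = E_\lambda(M)$, consistent with the convention).

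Finally I would assemble these pointwise equivalences: a point $x \in \mathbb{R}^q$ is a critical value of $\pi|_{\mathcal{M}_\lambda}$ if and only if there exists a pair $(x_+,x_-)\in M\times M$ with $\lambda x_+ + (1-\lambda)x_- = x$ and $T_{x_+}M + \tau_{x_+-x_-}(T_{x_-}M) \neq \mathbb{R}^q$, which is exactly the definition of $E_\lambda(M)$ in Definition \ref{defElambda}. The main subtlety to handle carefully is the bookkeeping identifying the affine translation $\tau_{\alpha-\beta}$ in Definition \ref{parallelism} with the purely linear span computation that comes out of $d\Gamma_\lambda$: one must be careful that "parallel" is a condition on the directions of the tangent planes (linear subspaces), and verify that the $d\pi$-image computed above records precisely those directions with the right base point $x$. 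A secondary point worth a remark is that $\dim \mathcal{M}_\lambda = 2n \geq q$, so the claim is genuinely about a generic-rank drop and not a dimensional triviality; no transversality or genericity of $M$ is needed here, the statement being valid for any smooth closed $M$. I expect the translation/linearization bookkeeping to be the only real obstacle, everything else being a direct unfolding of the linear algebra of $d\Gamma_\lambda$ and $d\pi$.
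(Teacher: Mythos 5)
Your argument is correct and is essentially the standard one: the paper itself states this theorem without proof, citing \cite{DRs}, and the proof there is the same direct computation of $d\Gamma_\lambda(T_{x_+}M\oplus T_{x_-}M)$ followed by projection onto the first factor, using $\lambda\notin\{0,1\}$ to identify the image with $T_{x_+}M+\tau_{x_+-x_-}(T_{x_-}M)$. Your handling of the two bookkeeping points (affine versus linear tangent planes, and the diagonal) matches the conventions of \cite{DRs}, so nothing is missing.
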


\begin{defn}\label{pilambdamap} $\forall \lambda\in\mathbb{R}\setminus  \{0,1\}$, the {\bf $\lambda$-point map} is the projection
$$\Psi_{\lambda}:  \mathbb R^{q}\times\mathbb R^{q}\to\mathbb R^{q} \ , \ (x^+,x^-)\to x=\lambda x^+ + (1-\lambda)x^- \ .$$
\end{defn}

\begin{rem}\label{psilambda}
Because $\Psi_{\lambda}=\pi\circ\Gamma_{\lambda}$ we can rephrase Theorem \ref{gensing}: {\it the  set of critical values of the projection $\Psi_{\lambda}$ restricted to $M\times M$ is $E_{\lambda}(M)$}.
\end{rem}

\subsection{Characterization of affine equidistants by contact}

In the literature, if  $M\subset\mathbb R^{2}$ is a smooth curve, the Wigner caustic $E_{1/2}(M)$
has been described in various ways, one of which  says that,
if ${\mathcal R}_a:\mathbb R^{2}\to\mathbb R^{2}$
denotes reflection through $a\in\mathbb R^{2}$, then $a\in E_{1/2}(M)$ when $M$ and  ${\mathcal R}_a(M)$ are not
transversal \cite{Ber, OH}. We generalize this description for every $\lambda$-equidistant of submanifolds of more arbitrary dimensions.

\begin{defn}\label{reflection}
$\forall  \lambda\in\mathbb{R}\setminus  \{0,1\}$, a  $\lambda$-{\bf reflection} through $a\in  \mathbb R^{q}$ is the map
\begin{equation}\label{reflect}
{\mathcal R}_a^{\lambda}: \mathbb R^{q}\to\mathbb R^{q} \ , \ x\mapsto {\mathcal R}_a^{\lambda}(x)=\frac{1}{\lambda}a-\frac{1-\lambda}{\lambda}x
\end{equation}
\end{defn}
\begin{rem} A  $\lambda$-reflection through $a$ is not a reflection in the strict sense because ${\mathcal R}_a^{\lambda}\circ{\mathcal R}_a^{\lambda}\neq id: \mathbb R^{q}\to\mathbb R^{q}$, instead,
$${\mathcal R}_a^{1-\lambda}\circ{\mathcal R}_a^{\lambda}= id: \mathbb R^{q}\to\mathbb R^{q} \ ,$$
so that, if $a=a_{\lambda}=\lambda a^+ + (1-\lambda)a^-$ is the $\lambda$-point of $(a^+,a^-)\in \mathbb R^{2q}$,
$${\mathcal R}_{a_{\lambda}}^{\lambda}(a^-)=a^+ \ , \  {\mathcal R}_{a_{\lambda}}^{1-\lambda}(a^+)=a^- \ .$$
Of course, for $\lambda=1/2$,  ${\mathcal R}_a^{1/2} \equiv {\mathcal R}_a$ is a reflection in the strict sense.
\end{rem}

Now, let $M$ be a smooth $n$-di\-men\-sion\-al submanifold of $\mathbb R^{q}$, with $2n\geq q$. Also, let $M^+$ be a germ of submanifold $M$ around $a^+$, let $M^-$ be a germ of submanifold $M$ around $a^-$ and let $a=a_{\lambda}=\lambda a^+ + (1-\lambda)a^-$ be the $\lambda$-point of $(a^+,a^-)\in M\times M\subset \mathbb R^{q}\times\mathbb R^{q}$.

Then, the following characterization is immediate:
\begin{prop} The following conditions are equivalent:
\begin{itemize}
\item[(i)] $a\in E_{\lambda}(M)$

 \item [(ii)] $M^+$ and ${\mathcal R}_{a}^{\lambda}(M^-)$ are not transversal at $a^+$

 \item[(iii)]  $M^-$ and ${\mathcal R}_{a}^{1-\lambda}(M^+)$ are not transversal at $a^-$.

 \end{itemize}
\end{prop}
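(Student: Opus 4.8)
The plan is to show that each of (i), (ii), (iii) is equivalent to the single algebraic condition that $(a^+,a^-)$ be a weakly parallel pair, i.e. that $T_{a^+}M+T_{a^-}M\neq\mathbb R^q$; here and below tangent spaces are regarded as linear subspaces of $\mathbb R^q$, on which translations act trivially, so the translation $\tau_{\alpha-\beta}$ of Definition \ref{parallelism} may be suppressed. For (i): restricting $\Psi_\lambda$ to $M\times M$, its differential at $(a^+,a^-)$ on $T_{a^+}M\oplus T_{a^-}M$ is the linear map $(v,w)\mapsto\lambda v+(1-\lambda)w$, whose image is $T_{a^+}M+T_{a^-}M$ because $\lambda$ and $1-\lambda$ are nonzero. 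Hence $(a^+,a^-)$ is a critical point of $\Psi_\lambda|_{M\times M}$ precisely when this sum is a proper subspace, and by Theorem \ref{gensing} in the form of Remark \ref{psilambda} this is equivalent to (i).

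The key computation is then the following. The map $\mathcal R_a^\lambda$ in \eqref{reflect} is affine with linear part the nonzero scalar map $-\tfrac{1-\lambda}{\lambda}\,\mathrm{Id}$; substituting $a=\lambda a^++(1-\lambda)a^-$ yields $\mathcal R_a^\lambda(a^-)=a^+$, so $\mathcal R_a^\lambda(M^-)$ is a germ of $n$-submanifold through $a^+$ whose tangent space there is the image of $T_{a^-}M$ under that scalar map, namely $T_{a^-}M$ itself. Since $\dim T_{a^+}M+\dim T_{a^-}M=2n\geq q$, transversality of $M^+$ and $\mathcal R_a^\lambda(M^-)$ at $a^+$ means exactly $T_{a^+}M+T_{a^-}M=\mathbb R^q$; therefore (ii) is the negation of this, i.e. the weakly parallel condition, and hence equivalent to (i). For (iii) I would argue the same way with $\mathcal R_a^{1-\lambda}$, for which $\mathcal R_a^{1-\lambda}(a^+)=a^-$ and $T_{a^-}\bigl(\mathcal R_a^{1-\lambda}(M^+)\bigr)=T_{a^+}M$; alternatively, using $\mathcal R_a^{1-\lambda}\circ\mathcal R_a^\lambda=\mathrm{id}$ from the Remark, the diffeomorphism $\mathcal R_a^{1-\lambda}$ carries the pair $(M^+,\mathcal R_a^\lambda(M^-))$ at $a^+$ to the pair $(\mathcal R_a^{1-\lambda}(M^+),M^-)$ at $a^-$, and diffeomorphisms preserve transversality, so (ii)$\Leftrightarrow$(iii).

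I do not expect a serious obstacle. The one point requiring care — and the reason the statement is only ``immediate'' once Theorem \ref{gensing} is available — is the localization used for (i): ``$a\in E_\lambda(M)$'' must be read in the germ sense, as recording that this particular pair $(a^+,a^-)$ belongs to the critical set of $\Psi_\lambda|_{M^+\times M^-}$, and not merely that $a$ happens to be the $\lambda$-point of some unrelated weakly parallel pair elsewhere on $M$. With that convention fixed, everything rests on the homothety computation of $T_{a^+}\bigl(\mathcal R_a^\lambda(M^-)\bigr)$ together with the elementary description of $d\Psi_\lambda$.
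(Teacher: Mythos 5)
Your proof is correct and follows exactly the route the paper intends when it calls the proposition ``immediate'': (i) is identified with criticality of $\Psi_\lambda|_{M\times M}$ at the given pair via Theorem \ref{gensing}/Remark \ref{psilambda}, and (ii), (iii) reduce to the same weak-parallelism condition because the linear part of $\mathcal R_a^\lambda$ is a nonzero homothety. Your remark that (i) must be read in the germ sense, referring to the specific pair $(a^+,a^-)$ rather than to $a$ being the $\lambda$-point of some other weakly parallel pair, is a genuine and well-spotted point of care that the paper leaves implicit.
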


Therefore, {\it the study of the singularities of $E_{\lambda}(M)\ni 0$ can be proceeded via the study of the contact between $M^+$ and ${\mathcal R}_0^{\lambda}(M^-)$ or, equivalently, the contact between  ${\mathcal R}_0^{1-\lambda}(M^+)$ and $M^-$}.

\subsection{Singularities of contact}

Let $N_1, N_2$ be germs at $x$ of smooth $n$-dimensional submanifolds of
the space $\mathbb R^{q}$, with $2n\ge q$. We describe $N_1,N_2$  in the following way:
\begin{itemize}
\item
$N_1=f^{-1}(0)$, where $f:(\mathbb R^q,x)\rightarrow (\mathbb R^{q-n},0)$ is a submersion-germ,
\item
$N_2=g(\mathbb R^n)$, where $g:(\mathbb R^n,0)\rightarrow (\mathbb R^q,x)$ is an embedding-germ.
\end{itemize}

\begin{defn}
A {\bf contact map} between submanifold-germs $N_1, N_2$ is the following map-germ
$\kappa_{N_1,N_2} :(\mathbb R^n,0)\rightarrow (\mathbb R^{q-n},0),$ where $\kappa_{N_1, N_2}=f\circ g.$
\end{defn}

Let $\tilde{N_1}, \tilde{N_2}$ be another pair of germs at $\tilde{x}$ of smooth $n$-dimensional submanifolds of
the space $\mathbb R^{q}$, described in the same way as $N_1,N_2$.

\begin{defn}
The contact of $N_1$ and $N_2$ at $x$ is of the same {\bf contact-type} as the contact of $\tilde{N_1}$ and $\tilde{N_2}$ at $\tilde{x}$ if $\exists$ a diffeomorphism-germ
$\Phi:(\mathbb R^q,x)\rightarrow (\mathbb R^q,\tilde{x})$ s.t. $\Phi(N_1)=\tilde{N_1}$ and $\Phi(N_2)=\tilde{N_2}$. We denote the contact-type of $N_1$ and $N_2$ at $x$ by $\mathcal K(N_1,N_2,x)$.
\end{defn}

\begin{thm}[\cite{Mont}]
$\mathcal K(N_1,N_2,x) =\mathcal K(\tilde{N_1},\tilde{N_2},\tilde{x})$ if and only if the contact maps $f\circ g$ and $\tilde{f}\circ \tilde{g}$ are $\mathcal K$-equivalent.
\end{thm}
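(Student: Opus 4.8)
The statement is Montaldi's characterization of contact-type by $\mathcal K$-equivalence of contact maps, and the plan is to prove the two implications separately: the forward one is essentially formal, while the converse requires a reduction to normal form followed by a Thom--Levine homotopy.

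For ``$\Rightarrow$'' I would first record two elementary facts: (a) two submersion-germs $(\mathbb R^q,x)\to(\mathbb R^{q-n},0)$ with the same zero-set germ differ by left multiplication by a germ $(\mathbb R^q,x)\to\mathrm{GL}_{q-n}(\mathbb R)$; (b) two embedding-germs $(\mathbb R^n,0)\to(\mathbb R^q,x)$ with the same image-germ differ by composition with a source diffeomorphism-germ. Granting these, suppose $\Phi:(\mathbb R^q,x)\to(\mathbb R^q,\tilde x)$ is a diffeomorphism-germ with $\Phi(N_1)=\tilde N_1$ and $\Phi(N_2)=\tilde N_2$. Then $f$ and $\tilde f\circ\Phi$ are submersions vanishing exactly on $N_1$, so $\tilde f\circ\Phi=M\cdot f$ for some $M:(\mathbb R^q,x)\to\mathrm{GL}_{q-n}(\mathbb R)$, and $\Phi\circ g$ and $\tilde g$ are embeddings parametrizing $\tilde N_2$, so $\Phi\circ g=\tilde g\circ\varphi$ for some $\varphi\in\mathrm{Diff}(\mathbb R^n,0)$. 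Hence $\tilde f\circ\tilde g\circ\varphi=\tilde f\circ\Phi\circ g=(M\circ g)\cdot(f\circ g)$ with $M\circ g:(\mathbb R^n,0)\to\mathrm{GL}_{q-n}(\mathbb R)$, which is precisely a $\mathcal K$-equivalence between $f\circ g$ and $\tilde f\circ\tilde g$. Taking $\Phi=\mathrm{id}$ here also shows that the $\mathcal K$-class of the contact map does not depend on the auxiliary choices of $f$ and $g$, so the $\mathcal K$-class attached to $\mathcal K(N_1,N_2,x)$ is well defined.

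For ``$\Leftarrow$'' I would proceed as follows. Using ambient diffeomorphisms (which preserve contact-type and, by the remark just made, leave the $\mathcal K$-class of the contact maps unchanged) reduce to $x=\tilde x=0$, $N_2=\tilde N_2=P:=\mathbb R^n\times\{0\}$, and $g=\tilde g$ the inclusion, so that the contact maps are $f|_P$ and $\tilde f|_P$. From the hypothesised equivalence $\tilde f|_P=M\cdot(f|_P\circ\varphi)$, absorb $\varphi$ by pulling $N_1$ back along the ambient extension $(u,v)\mapsto(\varphi(u),v)$ (which preserves $P$), and then absorb $M$ by left-multiplying the defining submersion of $N_1$ by the ambient extension $M(u,v):=M(u)$ (which does not change $N_1$); this reduces to $f|_P=\tilde f|_P$. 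Next, and this is the key preparatory move, since $f|_P=\tilde f|_P$ the $1$-jets $j^1f(0)$ and $j^1\tilde f(0)$ are surjective and agree on $T_0P$, so a linear change of coordinates of the form $(u,v)\mapsto(u+Cv,Dv)$, which fixes $P$ pointwise, can be chosen to achieve in addition $j^1f(0)=j^1\tilde f(0)$; this is a short finite-dimensional linear-algebra normalization using that the relevant blocks satisfy $\operatorname{col}A+\operatorname{col}B=\operatorname{col}A+\operatorname{col}\tilde B=\mathbb R^{q-n}$.

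With these reductions in force, $\tilde f-f$ vanishes on $P=\{v=0\}$ and has zero $1$-jet at $0$, so $\tilde f-f=\sum_j v_j h_j$ with germs $h_j$ satisfying $h_j(0)=0$. Setting $F_s:=f+s(\tilde f-f)$ for $s\in[0,1]$, we have $dF_s(0)=df(0)$, so every $F_s$ is a submersion near $0$ with $F_s|_P=f|_P$. Because $F_s$ is a submersion, $dF_s$ carries the germs of vector fields vanishing on $P$ onto the module of $\mathbb R^{q-n}$-valued germs whose components all vanish on $P$, and $\partial_sF_s=\sum_j v_j h_j$ lies in that module; hence one can solve $dF_s(\xi_s)=-\partial_sF_s$ with a germ $\xi_s$ of time-dependent vector field vanishing on $P$, depending smoothly on $s$, simply by applying a right inverse of the surjective $dF_s$. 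Integrating $\xi_s$ gives diffeomorphism-germs $\Psi_s$ of $(\mathbb R^q,0)$ with $\Psi_0=\mathrm{id}$, each fixing $P$ pointwise, and the standard computation gives $F_s\circ\Psi_s\equiv f$; at $s=1$ this reads $\tilde f=f\circ\Psi_1^{-1}$, whence $\Psi_1(N_1)=\tilde N_1$ and $\Psi_1(P)=P$. Composing $\Psi_1$ with the reduction diffeomorphisms produces the required $\Phi$. I expect this homotopy step to be the crux: the naive straight-line homotopy need not stay within submersions, as already $f(u,v)=v$, $\tilde f(u,v)=-v$ degenerates at $s=\tfrac12$, which is exactly why the preliminary normalization of the $1$-jets is indispensable; once the $1$-jets agree, the homological equation is solved without any Malgrange-type division, and the only remaining work is the bookkeeping that each reduction is compatible with the standing normalization $N_2=P$.
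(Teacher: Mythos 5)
The paper offers no proof of this statement: it is quoted verbatim from Montaldi \cite{Mont} and used as a black box, so there is no in-paper argument to compare against; I am therefore judging your proof on its own, and it is correct. The forward implication is the standard one, and your parenthetical remark that taking $\Phi=\mathrm{id}$ shows the $\mathcal K$-class of $f\circ g$ is independent of the auxiliary choices of $f$ and $g$ is worth keeping, since the statement tacitly presupposes this well-definedness. For the converse, your chain of reductions is sound: straightening $N_2=\tilde N_2=P$, absorbing the right equivalence $\varphi$ and the matrix factor $M$ into the defining submersion of $N_1$, and then matching $1$-jets by a map $(u,v)\mapsto(u+Cv,Dv)$ fixing $P$ pointwise -- the linear system $AC+\tilde BD=B$ with $D$ invertible is indeed solvable exactly because $\operatorname{col}A+\operatorname{col}B=\operatorname{col}A+\operatorname{col}\tilde B=\mathbb R^{q-n}$ (split the target as $\operatorname{col}A\oplus T$; the $T$-components of $B$ and $\tilde B$ are both surjective onto $T$ with kernels of equal dimension, giving an invertible $D$, after which $C$ is found from surjectivity of $A$ onto its column space). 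The Thom--Levine step then goes through as you describe, since $\partial_sF_s$ lies in the ideal of $P$ and $dF_s(0)=df(0)$ keeps every $F_s$ submersive. For comparison, Montaldi's published argument for the converse is shorter: choosing coordinates in which $N_2=\mathbb R^n\times\{0\}$ and the complementary factor $\{0\}\times\mathbb R^{q-n}$ meets $T_xN_1$ trivially, the map $(u,v)\mapsto(u,f(u,v))$ is a diffeomorphism-germ carrying $N_1$ to the zero section and $N_2$ to the graph of the contact map, so the converse becomes literally Mather's geometric formulation of $\mathcal K$-equivalence (a diffeomorphism of source~$\times$~target preserving the zero section and matching the two graphs). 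Your route trades that slick identification for an explicit homotopy, at the cost of the jet normalization, which -- as your $f=v$, $\tilde f=-v$ example rightly stresses -- is genuinely needed for the straight-line homotopy to stay within submersions.
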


\begin{defn}
We say that $N_1$ and $N_2$ are $k$-{\bf tangent} at $x=0$ if
$$\dim (T_0N_1 \cap T_0N_2)=k \ .$$
If $k$ is maximal, that is, $ k=\dim(T_0N_1)=\dim(T_0N_2)$,
we say that $N_1$ and $N_2$ are {\bf tangent} at $0$.
\end{defn}

\begin{rem}\label{tan-par} In  the context of affine equidistants, $E_{\lambda}(M)$, note that  $N_1=M^+$ and $N_2=\mathcal R_0^{\lambda}(M^-)$ are $k$-{\bf tangent} at $0$  if and only if $T_{a^+}M^+$ and $T_{a^-}M^-$ are $k$-{\bf parallel}, where $\lambda a^+ +(1-\lambda) a^- = 0\in E_{\lambda}(M)$. \end{rem}

\begin{prop}[\cite{DRR}]
If $N_1$ and $N_2$ are $k$-tangent at $0$ then the corank of the contact map $\kappa_{N_1,N_2}$ is $k$.
\end{prop}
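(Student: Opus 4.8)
The plan is to reduce the statement to a rank computation for the differential $d(\kappa_{N_1,N_2})_0$ and then read off the corank by rank--nullity. Without loss of generality take $x=0$, so $f\colon(\mathbb R^q,0)\to(\mathbb R^{q-n},0)$ is a submersion-germ with $N_1=f^{-1}(0)$, $g\colon(\mathbb R^n,0)\to(\mathbb R^q,0)$ is an embedding-germ with $g(\mathbb R^n)=N_2$, and $\kappa_{N_1,N_2}=f\circ g$. By the chain rule, $d(\kappa_{N_1,N_2})_0=df_0\circ dg_0$. Here two elementary facts about these descriptions of the submanifolds enter: since $f$ is a submersion vanishing on $N_1$, the linear map $df_0\colon\mathbb R^q\to\mathbb R^{q-n}$ is onto with $\ker df_0=T_0N_1$; and since $g$ parametrises $N_2$ as an embedding, $dg_0\colon\mathbb R^n\to\mathbb R^q$ is injective with image $T_0N_2$.

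Next I would restrict attention to the linear map $df_0|_{T_0N_2}\colon T_0N_2\to\mathbb R^{q-n}$. Because $dg_0$ is a linear isomorphism onto $T_0N_2$, we have $\operatorname{rank} d(\kappa_{N_1,N_2})_0=\operatorname{rank}\bigl(df_0|_{T_0N_2}\bigr)$, and moreover $\ker d(\kappa_{N_1,N_2})_0=(dg_0)^{-1}\bigl(T_0N_1\cap T_0N_2\bigr)$. The kernel of $df_0|_{T_0N_2}$ is exactly $T_0N_2\cap\ker df_0=T_0N_1\cap T_0N_2$, which has dimension $k$ by the hypothesis of $k$-tangency. Applying rank--nullity to $df_0|_{T_0N_2}$, whose source has dimension $n$, gives $\operatorname{rank} d(\kappa_{N_1,N_2})_0=n-k$, hence $\dim\ker d(\kappa_{N_1,N_2})_0=k$; that is, the corank of $\kappa_{N_1,N_2}$ at $0$ is $k$. (When $q=2n$, the situation of interest in this paper, the target dimension $q-n$ also equals $n$, so the cokernel of $d(\kappa_{N_1,N_2})_0$ has dimension $k$ as well, and the two conventions for ``corank'' coincide.)

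For completeness I would add a sentence noting that the answer does not depend on the chosen submersion $f$ nor parametrisation $g$: any two contact maps for the same pair $(N_1,N_2)$ at $0$ have the same contact-type (take $\Phi=\mathrm{id}$), hence are $\mathcal K$-equivalent by the cited theorem of Montaldi, and $\mathcal K$-equivalence preserves the rank of the differential at the origin; alternatively, the computation above already expresses $\operatorname{rank} d(\kappa_{N_1,N_2})_0=n-k$ purely in terms of the intrinsic data $T_0N_1,T_0N_2$. There is essentially no obstacle here: the only point deserving a line of care is the dimension bookkeeping --- that $n-k\le q-n$, which follows from $T_0N_1+T_0N_2\subseteq\mathbb R^q$ giving $2n-k\le q$ --- so that $n-k$ is an admissible rank for a map into $\mathbb R^{q-n}$ and the assertion ``corank $=k$'' is consistent.
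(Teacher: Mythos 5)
Your argument is correct: the chain-rule identity $d(\kappa_{N_1,N_2})_0=df_0\circ dg_0$ together with $\ker df_0=T_0N_1$ and $dg_0$ being an isomorphism onto $T_0N_2$ immediately gives $\dim\ker d(\kappa_{N_1,N_2})_0=\dim(T_0N_1\cap T_0N_2)=k$, and your remarks on the corank convention and on independence of the choice of $f$ and $g$ are apt. The paper states this proposition without proof, citing \cite{DRR}, and the proof there is essentially this same linear-algebra computation, so your route matches the source.
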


\section{Extrinsic geometry of surfaces in 4-space: overview}


In this section, we remind  basic definitions and results on the extrinsic geometry of smooth surfaces in $4$-space. See \cite{li, MRR}  for details.

Let  ${f}: U \to \mathbb R^4$ be a local parametrisation of $M$, where $U$ is an open subset of $\mathbb R^2$. 
Let $\{{\bf e}_1, {\bf e}_2, {\bf e}_3, {\bf e}_4\}$ be a positively oriented orthonormal  frame in $\mathbb R^4$  such that at any $y=(y_1,y_2) \in U,$ $\{ {\bf e}_1(y), {\bf e}_2(y)\} $ is a basis for the tangent plane $T_pM$ 
and $\{{\bf e}_3(y), {\bf e}_3(y)\}$  is a basis for the normal plane $N_pM$ at $p={{f}(y)}$.

\begin{defn} The {\bf second fundamental form} of $M$ at $p$ is the vector valued quadratic form
${\rm II}_p: T_pM \to N_pM$ associated 
to the normal component of the second derivative $d^2{f}$ of ${f}$ at $p,$ that is, 
$$
{\rm II}_p=\langle d^2{f}, {\bf e}_3\rangle  {\bf e}_3+ \langle d^2{f}, {\bf e}_4\rangle  {\bf e}_4.
$$ 
\end{defn}

Let $a=\left\langle{\bf e_3}, {f}_{y_1y_1}\right\rangle,  b= \left\langle{\bf e_3}, {f}_{y_1y_2}\right\rangle,  c=\left\langle{\bf e_3}, {f}_{y_2y_2}\right\rangle, $
 $e=\left\langle{\bf e_4}, {f}_{y_1y_1}\right\rangle,  f= \left\langle{\bf e_4}, {f}_{y_1y_2}\right\rangle,  g=\left\langle{\bf e_4}, {f}_{y_2y_2}\right\rangle.$
 
 Then, with this notation,  we can write $${\rm II_p}({\bf u})=(au_1^2+2b u_1u_2+cu_2^2){\bf e_3}+ (eu_1^2+2fu_1u_2+gu_2^2){\bf e_4},$$ where ${\bf u}=u_1{{\bf e_1}}+u_2 { {\bf e_2}} \in T_pM.$

\vspace{0.3cm} 
 
 The matrix $
\alpha=\left (\begin{array}{ccc}
a& b& c\\
e & f& g
\end{array} \right)
$ is called the matrix of the second fundamental form with respect to the orthonormal frame  $\{{\bf e}_1, {\bf e}_2, {\bf e}_3, {\bf e}_4\}.$

\begin{defn} 
	The second fundamental form of $M$ at $p,$ along a normal vector field $\nu$  is the quadratic form
	$\rm II^{\nu}_{p}: T_p M \to \mathbb R$ defined by
	$${\rm II}^{\nu}_{p}({\mbox{\bf u}})=\left <{\rm II_p}({\bf \mbox{\bf u}}), {\mbox{\bf v}} \right >, \,\,\,\, {\bf \mbox{\bf u} \in T_pM},\, {\mbox{\bf v}}=\nu(p) \in N_pM,$$  where ${\rm II_p}({\bf \mbox{\bf u}}): T_pM \to N_pM$  is the second fundamental form at $p.$ 
\end{defn}

 Let $S^1$ be the unit circle in $T_pM$ parametrized by $\theta \in [0, 2\pi].$  Denote by $\gamma_{\theta}$ the curve obtained by
 intersecting $M$ with the hyperplane at $p$ composed by the direct sum of
 the normal plane $N_{p}M$ and the straight line in the tangent direction
 represented by $\theta$. Such curve is called {\em normal section of $M$ in
 	the direction $\theta$}.
 
 
 \begin{Def}  The {\bf curvature ellipse} is the image of the mapping
 	$$\begin{array}{ccll} \eta : & S^1  & \longrightarrow & N_pM \\
 	& {\theta} & \longmapsto &  \eta ({\theta}),
 	\end{array},$$ where $\eta(\theta)$ is the curvature vector of $\gamma_{\theta}.$
 \end{Def}

Scalar invariants of the extrinsic geometry of surfaces in $\mathbb R^4$ can be defined using the coefficients of the second fundamental form.
For instance the Gaussian curvature
\begin{equation}\label{eq:Curv4}
\mathcal G_M=ac-b^2+eg-f^2
\end{equation} and the  $\Delta$ function
\begin{equation}\label{eq:Delta4}
\Delta_M= \frac{1}{4} det \left [\begin{array}{cccc}
a & 2b & c & 0 \\
e & 2f & g & 0 \\
0 & a & 2b & c \\
0 & e & 2f  & g  \\
\end{array} \right].
\end{equation}

Although neither  $\Delta_M$ nor $\mathcal G_M$ are affine invariants (a chosen metric was used to define them), the following proposition  allows for an affine-invariant classification of  a point $p\in M \subset \mathbb R^4$.

\begin{prop}[\cite{DMR}, Proposition 4.18]\label{signal} The sign of $\Delta_M$ is an affine invariant. When $rank\{II_{(p)}\}=1$, the sign of  $\mathcal G_M$  is also an affine  invariant. \end{prop}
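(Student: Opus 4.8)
The plan is to analyze how the coefficients of the second fundamental form transform under an affine change of coordinates of $\mathbb R^4$, and then track the induced transformation of the $4\times4$ matrix whose determinant (up to the factor $1/4$) defines $\Delta_M$, and similarly of the quantity defining $\mathcal G_M$. First I would fix a local parametrisation ${f}:U\to\mathbb R^4$ and an orthonormal frame $\{{\bf e}_1,{\bf e}_2,{\bf e}_3,{\bf e}_4\}$ adapted to $M$ as in the excerpt, recording the matrix $\alpha$ of $\mbox{\rm II}_p$. A general affine map $A\in GL(4,\mathbb R)$ (the translation part is irrelevant) sends $M$ to $A(M)$; a natural adapted frame for $A(M)$ at $A(p)$ is obtained by applying $A$ to $\{{\bf e}_1,{\bf e}_2\}$ (giving a, generally non-orthonormal, tangent basis) and then choosing any basis of the new normal plane, followed by a Gram--Schmidt orthonormalisation. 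The key observation is that the second fundamental form, being the normal component of $d^2{f}$, transforms in a controlled bilinear way: reparametrising the tangent plane by $P\in GL(2,\mathbb R)$ multiplies the quadratic-form data on the domain side, while changing the normal frame by $Q\in GL(2,\mathbb R)$ (composed with an orthogonal projection reflecting the metric distortion) acts linearly on the $\mathbb R^2$-valued target, so that the new matrix of the second fundamental form has the form $\alpha' = Q\cdot\alpha\cdot(\text{symmetric quadratic action of }P)$ up to a positive scalar coming from the normalisation of lengths.

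The main step is then the bookkeeping lemma: if $\tilde\alpha$ denotes the pair of symmetric matrices $\left(\begin{smallmatrix}a&b\\ b&c\end{smallmatrix}\right)$, $\left(\begin{smallmatrix}e&f\\ f&g\end{smallmatrix}\right)$, then under $(\tilde\alpha_3,\tilde\alpha_4)\mapsto (P^{\top}\tilde\alpha_3 P, P^{\top}\tilde\alpha_4 P)$ the $4\times4$ matrix in \eqref{eq:Delta4} gets multiplied on both sides by invertible matrices whose determinants are powers of $\det P$, so $\Delta_M$ is multiplied by $(\det P)^{4}$, a positive number; and under the normal change $(\tilde\alpha_3,\tilde\alpha_4)\mapsto(Q\cdot(\tilde\alpha_3,\tilde\alpha_4))$, i.e. replacing each of the two matrices by a linear combination of the two old ones, the determinant in \eqref{eq:Delta4} is multiplied by $(\det Q)^{2}$ — this is the classical fact that $\Delta_M$ is, up to sign, the discriminant of the pencil of quadratic forms spanned by $\mbox{\rm II}^{{\bf e}_3}$ and $\mbox{\rm II}^{{\bf e}_4}$, hence invariant under nondegenerate linear substitutions in the pencil, and its vanishing locus as well as its sign are insensitive to such substitutions. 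Combining, $\Delta_{A(M)} = c\,\Delta_M$ with $c>0$, proving the first claim. I would double-check this by computing $\Delta_M$ intrinsically as (a multiple of) the resultant/discriminant associated to the curvature ellipse and noting that affine maps send curvature ellipses to curvature ellipses of the image surface (up to the above linear identifications), so the classification of $p$ as hyperbolic/parabolic/elliptic/inflection is affine-invariant; the sign of $\Delta_M$ records exactly whether $p$ lies inside, on, or outside its curvature ellipse (equivalently, the number of asymptotic directions), which is manifestly affine.

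For the second statement, when $\mathrm{rank}\{\mbox{\rm II}_{(p)}\}=1$ the curvature ellipse degenerates to a (possibly degenerate) segment, and one may choose the normal frame so that $\mbox{\rm II}^{{\bf e}_4}\equiv 0$, i.e. $e=f=g=0$; then $\mathcal G_M = ac-b^2$ is the determinant of the single surviving symmetric matrix $\tilde\alpha_3$. Under a tangent reparametrisation this determinant is multiplied by $(\det P)^{2}>0$, and under a normal change that preserves the rank-one condition (hence preserves the line spanned by ${\bf e}_3$ modulo the kernel direction) it is multiplied by the square of the relevant scalar, again positive; the metric-distortion scalar from orthonormalisation also enters as a positive power. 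Hence the sign of $\mathcal G_M$ is unchanged, while a computation in the general (non-adapted) normal frame shows the extra $eg-f^2$ term vanishes precisely because the second fundamental form takes values in a line. The hard part — and the step I would be most careful with — is the first one: correctly disentangling the three contributions (tangent reparametrisation, normal-frame change, and the positive rescaling forced by re-orthonormalising after applying $A$) and verifying that each acts on the determinant in \eqref{eq:Delta4} by a \emph{positive} factor; a clean way to avoid sign errors is to phrase the whole argument in terms of the $GL(2)\times GL(2)$-action on the pencil of binary quadratics and invoke the invariance of the discriminant's sign under this action, rather than expanding the $4\times4$ determinant by hand.
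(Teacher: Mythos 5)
The paper offers no proof of this proposition (it is quoted from \cite{DMR}, Prop.~4.18), but your argument is correct and is the standard one: $\Delta_M$ is, up to a positive constant, the resultant (equivalently, the discriminant of the pencil $\det(\nu_3\tilde\alpha_3+\nu_4\tilde\alpha_4)$) of the two coordinate quadratics of $\mathrm{II}_p$, an affine map acts on this data through $GL(2)\times GL(2)$ --- a tangent reparametrisation $P$ and the isomorphism $Q$ of normal planes induced on the quotients $T_p\mathbb R^4/T_pM$ --- and $\Delta_M$ picks up the positive factor $(\det P)^4(\det Q)^2$, while in the rank-one case $\mathcal G_M$ reduces to the determinant of the single surviving quadratic form, which transforms by positive squares. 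One small correction to a side remark: in a general orthonormal normal frame the rank-one condition gives $(e,f,g)=t(a,b,c)$, so $eg-f^2=t^2(ac-b^2)$ does not vanish but is a nonnegative multiple of $ac-b^2$, which is all that is needed for the sign claim.
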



\begin{defn} A point $p\in M$ is called

\noindent (i)  {\bf parabolic} if $\Delta_M(p)=0$,

\noindent (ii) {\bf elliptic} if $\Delta_M(p)>0$,

\noindent (iii) {\bf hyperbolic} if $\Delta_M(p)<0$.

\end{defn}

\begin{defn} A parabolic point $p\in M$ is called

\noindent (i-i) {\bf point of nondegenerate ellipse}, if $rank\{II_{(p)}\}=2$.

When $rank\{II_{(p)}\}=1$, $p$ is an inflection point. In this case, it is 

\noindent (i-ii) {\bf inflection point of real type}, if $\mathcal G_M(p)<0$, 

\noindent (i-iii) {\bf inflection point of flat type}, if $\mathcal G_M(p)=0$.

\noindent (i-iv)  {\bf inflection point of imaginary type}, if $\mathcal G_M(p)>0$,
\end{defn}

\begin{defn}
A direction ${\bf v} \in N_{p}M$ is a {\bf binormal direction} at $p$ if the second fundamental form ${\rm II^{{\bf v}}_p}$ along the ${\bf v}$ direction is a degenerate quadratic form. In this case, a direction ${\bf u} \in T_pM$ in the kernel  of ${\rm II^{{\bf v}}_p}({\bf u})$ is called an  {\bf asymptotic direction}.
\end{defn}


\begin{defn} For a surface  $M\subset\mathbb R^4$, $p\in M$ and ${\bf u}\in T_pM$, ${\bf v}\in N_pM$, we say that $({\bf u}, {\bf v})$  is a {\bf contact pair} of $M$ at $p$ if ${\bf v}$ is a binormal direction at $p$ and ${\bf u}$ is an asymptotic direction associated to ${\bf v}$.
\end{defn}

\begin{prop} [\cite{MRR}, Lemma 3.2] Let $M$ be a surface in $\mathbb R^4$,

\noindent 1) For a hyperbolic point $p\in M$, there are exactly $2$ contact pairs at $p$.

\noindent 2) For an elliptic point $p\in M$, there are no contact pairs at $p$.

\noindent 3) For a parabolic point $p\in M$,

i) if $p$ is a point of nondegenerate ellipse, then there exists only one contact pair at $p$.

ii) if $p$ is an inflection point, then there exists only one  ${\bf v}\in N_pM$ such that, for all ${\bf u}\in T_pM$,
\ $({\bf u},{\bf v})$ is a contact pair at $p$.
\end{prop}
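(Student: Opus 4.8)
The notions of binormal and asymptotic direction, as well as the elliptic/hyperbolic/parabolic trichotomy (by Proposition~\ref{signal}), are affine invariant, so it suffices to work in a fixed adapted orthonormal frame $\{{\bf e}_1,{\bf e}_2,{\bf e}_3,{\bf e}_4\}$ at $p$, with the matrix $\alpha=\left(\begin{smallmatrix}a&b&c\\ e&f&g\end{smallmatrix}\right)$ of the second fundamental form. Writing $Q_3({\bf u})=au_1^2+2bu_1u_2+cu_2^2$ and $Q_4({\bf u})=eu_1^2+2fu_1u_2+gu_2^2$, so that ${\rm II}_p({\bf u})=Q_3({\bf u}){\bf e}_3+Q_4({\bf u}){\bf e}_4$, the quadratic form ${\rm II}^{\bf v}_p=v_3Q_3+v_4Q_4$ of a normal direction ${\bf v}=v_3{\bf e}_3+v_4{\bf e}_4$ is represented in $\{{\bf e}_1,{\bf e}_2\}$ by the symmetric matrix
\[
\begin{pmatrix} v_3a+v_4e & v_3b+v_4f\\ v_3b+v_4f & v_3c+v_4g\end{pmatrix},
\]
whose entries are linear in $(v_3,v_4)$. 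Hence ${\bf v}$ is a binormal direction exactly when this matrix is singular, i.e.\ when
\[
(ac-b^2)\,v_3^2+(ag+ce-2bf)\,v_3v_4+(eg-f^2)\,v_4^2=0 ,
\]
a binary quadratic equation on $[v_3:v_4]\in\mathbb P(N_pM)$.

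The first step -- and essentially the only computation -- is the algebraic identity that the discriminant of this binary quadratic equals $-\Delta_M$ up to a positive factor; equivalently, that the $4\times4$ determinant in \eqref{eq:Delta4} is, up to sign, the Sylvester resultant of $Q_3$ and $Q_4$. I would prove it by a direct expansion of both sides. Granting it, the definition of the three classes of points translates immediately into the number of binormal directions: at an elliptic point ($\Delta_M>0$) the quadratic has no real root, so there is no binormal direction and no contact pair; at a hyperbolic point ($\Delta_M<0$) it has two distinct real projective roots, hence two binormal directions; at a parabolic point ($\Delta_M=0$) it has either one double real root or vanishes identically.

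The second step counts asymptotic directions for a given binormal ${\bf v}$. Since the matrix of ${\rm II}^{\bf v}_p$ is singular it has rank $\le 1$: if the rank is $1$ the kernel is a line, yielding a unique asymptotic direction, hence a single contact pair with this ${\bf v}$; if ${\rm II}^{\bf v}_p\equiv0$ every ${\bf u}\in T_pM$ is asymptotic. To decide which occurs I would use that ${\rm II}^{\bf v}_p\equiv0$ forces ${\bf v}\perp\mathrm{span}\{{\rm II}_p({\bf u}):{\bf u}\in T_pM\}$, that this span equals $N_pM$ precisely when $\mathrm{rank}\,\alpha=2$, and that $\mathrm{rank}\,\alpha\le1$ implies $\Delta_M=0$ (a short substitution). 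Therefore: at a hyperbolic point $\mathrm{rank}\,\alpha=2$, so neither binormal annihilates ${\rm II}^{\bf v}_p$, each of the two binormals gives a rank-one form with a single kernel line, and there are exactly two contact pairs; and at a parabolic point of nondegenerate ellipse ($\mathrm{rank}\,\alpha=2$) the binary quadratic cannot vanish identically -- a genuine $2$-plane of binary quadratic forms is never contained in the discriminant cone -- so it has exactly a double root, the associated ${\bf v}$ gives a rank-one form, and there is exactly one contact pair.

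Finally, at an inflection point $\mathrm{rank}\,\alpha=1$: exchanging ${\bf e}_3,{\bf e}_4$ if necessary one may write $(e,f,g)=t(a,b,c)$, so the binormal equation becomes $(ac-b^2)(v_3+tv_4)^2=0$. When $\mathcal G_M(p)\neq0$ (inflection of real or imaginary type) one has $ac-b^2\neq0$, hence ${\bf v}_0\propto -t{\bf e}_3+{\bf e}_4$ is the unique binormal and ${\rm II}^{{\bf v}_0}_p=-tQ_3+Q_4\equiv0$; thus $({\bf u},{\bf v}_0)$ is a contact pair for every ${\bf u}\in T_pM$ and ${\bf v}_0$ is the only normal direction with this property, which is statement 3(ii). (The flat case $\mathcal G_M(p)=0$, where $ac-b^2=0$ and every normal direction is binormal, does not occur on a generic surface and is excluded here.) The one real obstacle is the determinant identity of the first step relating $\Delta_M$ to the resultant of $Q_3$ and $Q_4$; with it in hand, everything else is elementary linear algebra of $2\times2$ symmetric matrices together with bookkeeping on the rank-one locus.
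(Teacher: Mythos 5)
The paper does not prove this proposition --- it is quoted verbatim from \cite{MRR} (Lemma 3.2) as background, so there is no in-paper argument to compare against. Judged on its own, your proof is correct and is the standard one for this result. The central identity you defer --- that the discriminant of the binary quadratic $(ac-b^2)v_3^2+(ag+ce-2bf)v_3v_4+(eg-f^2)v_4^2$ cut out by the binormal condition equals $-4\Delta_M$ with $\Delta_M$ as in (\ref{eq:Delta4}) --- does hold: expanding both sides gives
$(ag+ce-2bf)^2-4(ac-b^2)(eg-f^2)=a^2g^2+c^2e^2-2aceg-4abfg-4bcef+4acf^2+4b^2eg=-4\Delta_M$,
so the elliptic/hyperbolic/parabolic trichotomy translates into $0$, $2$, or $1$ (possibly degenerate) binormal directions exactly as you say. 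Your remaining steps are sound: the left kernel of $\alpha$ is trivial when $\operatorname{rank}\alpha=2$, so no binormal annihilates ${\rm II}^{\bf v}_p$ there and each contributes a single asymptotic line; and your parenthetical claim that a $2$-plane of binary quadratics cannot lie in the discriminant cone is justified because $AC-B^2$ has signature $(1,2)$, whose maximal isotropic subspaces are lines. Two small remarks. First, at a flat inflection point ($\mathcal G_M=0$, $\operatorname{rank}\alpha=1$ with $ac-b^2=0$) the conclusion of 3(ii) still holds and needs no genericity hypothesis: every normal direction is then binormal, but the only ${\bf v}$ for which \emph{all} of $T_pM$ is asymptotic is still the unique direction in the left kernel of $\alpha$, by the same computation you use for the real/imaginary types; so you could simply delete the exclusion. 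Second, when you swap ${\bf e}_3,{\bf e}_4$ to arrange $(e,f,g)=t(a,b,c)$ you implicitly use that $\operatorname{rank}\alpha=1$ (not $0$), which is part of the definition of inflection point here, so that is fine.
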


\section{Extrinsic geometry of surfaces in 4-space and singularities of their affine equidistants}

We now present the geometric interpretation for the realizations of stable singularities of affine equidistants of surfaces in  $\mathbb R^4$.

We first recall the following result from  \cite{DRR}:

\begin{thm}[\cite{DRR}, Theorem 5.2]\label{eq:stable}
There exists a residual set  $\mathcal S$ of embeddings $i: M^2 \to \mathbb R^4,$ such that the map $\Psi_{\lambda}: M\times M \setminus \Delta \to \mathbb R^4$  is locally stable, where $\Psi_{\lambda}(x,y)=\lambda i(x)+(1-\lambda)i(y)$ and $\Delta$ is the diagonal in $M\times M$.
\end{thm}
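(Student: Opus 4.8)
\textbf{Proof proposal for Theorem \ref{eq:stable}.}

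The plan is to realize $\Psi_\lambda$ as a member of a parametrized family of maps and apply Thom's transversality machinery together with the Mather--Gaffney local stability criterion, exploiting the fact that $\Psi_\lambda$ factors through the linear $\lambda$-chord transformation $\Gamma_\lambda$. First I would fix an embedding $i_0 \colon M \to \mathbb R^4$ and, for a sufficiently large ball of embeddings $i$ near $i_0$ (in the Whitney $C^\infty$ topology), consider the associated maps $\Psi_\lambda^i \colon M\times M\setminus\Delta \to \mathbb R^4$. The key structural observation is Remark \ref{psilambda}: since $\Psi_\lambda = \pi\circ\Gamma_\lambda$ and $\Gamma_\lambda$ is a linear diffeomorphism of $\mathbb R^q\times\mathbb R^q$ onto $T\mathbb R^q$, the map $\Psi_\lambda|_{M\times M}$ is locally stable if and only if the restriction of the standard (linear) projection $\pi\colon T\mathbb R^4 \to \mathbb R^4$ to the submanifold $\mathcal M_\lambda = \Gamma_\lambda(M\times M)$ is locally stable; and the latter is, up to affine coordinate changes, a \emph{central projection / family-of-functions} situation of exactly the type handled by generic projection theorems.

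The main technical step is to verify that, for $i$ in a residual set, $\Psi_\lambda^i$ restricted to the (non-compact but second-countable) manifold $M\times M\setminus\Delta$ is locally infinitesimally stable, equivalently (by Mather--Gaffney, since $\Psi_\lambda$ is a proper-on-germs map into $\mathbb R^4$ and $4$ lies in the nice dimensions for the pair $(4,4)$) that its multi-germs are all $\mathcal A$-stable. To get this generically, I would set up a jet transversality argument: consider the multi-jet extension of the evaluation map $(i,(x,y))\mapsto j^k\Psi_\lambda^i(x,y)$ and show it is transverse to each $\mathcal A$-invariant stratum in the relevant multi-jet space, using that varying $i$ moves the two local branches $M^+$ and $M^-$ independently (away from the diagonal the two points $x\neq y$ are distinct, so local perturbations of the embedding near $i(x)$ and near $i(y)$ are independent). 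This independence is what makes the universal family $(x,y)\mapsto \Psi_\lambda^i(x,y)$ a genuinely versal unfolding of each of its mono- and multi-germs, so Thom transversality applies; then Mather's theorem that transversality to the stable strata in sufficiently high jet space implies $\mathcal A$-stability (valid in nice dimensions, which $(4,4)$ is) finishes the local stability. Finally one passes from "transverse for an open dense set of $k$-jets" to "a residual set $\mathcal S$ of embeddings" by the usual countable intersection over a locally finite cover of $M\times M\setminus\Delta$ by relatively compact opens, invoking the Baire property of the space of embeddings with the Whitney topology.

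The hard part, and the place where genericity can fail, is controlling the behavior \emph{near the diagonal} $\Delta$ and at strongly parallel pairs: as $(x,y)\to\Delta$ the chord degenerates, $\Psi_\lambda$ is no longer proper on $M\times M\setminus\Delta$ in a neighborhood of $\Delta$, and the corank of the contact map (Remark \ref{tan-par}, Proposition on corank) jumps, so a priori arbitrarily degenerate multi-singularities could accumulate on $\Delta$. The resolution — which is exactly the content one imports from \cite{DRR}, Theorem 5.2 — is that the theorem is stated only for the \emph{open} manifold $M\times M\setminus\Delta$, where properness does hold locally on germs (each point has a relatively compact neighborhood with compact preimage), so the Mather--Gaffney criterion is applicable there; the excision of $\Delta$ is essential and is precisely why the on-shell part of the equidistant (the part near $M$ itself) is \emph{not} covered and requires the separate treatment referenced in the introduction. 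With $\Delta$ removed, the remaining potential obstruction is that strong parallelism ($k=2$) forces the contact map to have corank $2$, and one must check that the corresponding singularities ($C^{\pm}_{2,2}$ in the paper's list) are still finitely $\mathcal A$-determined and occur stably — this is guaranteed by the dimension count $2n = q = 4$ placing us in nice dimensions, so no further argument beyond the transversality step is needed.
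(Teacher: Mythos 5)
The paper does not actually prove this statement: it is quoted from \cite{DRR} (Theorem 5.2) and used as an imported black box, so there is no internal proof to compare yours against. Your skeleton --- parametrized multi-jet transversality over the space of embeddings, Mather's theory of stability via transversality to $\mathcal A$-orbits in multi-jet space, the nice-dimensions hypothesis for $(4,4)$, and a Baire-category argument over a countable cover of $M\times M\setminus\Delta$ --- is the right general strategy for results of this kind.

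There is, however, a genuine gap at precisely the step you single out as the key one. You claim that because perturbations of $i$ near $i(x)$ and near $i(y)$ are independent for $x\neq y$, the family $(i,(x,y))\mapsto j^k\Psi^i_\lambda(x,y)$ is a versal unfolding of each of its (multi-)germs, so that Thom transversality in the full multi-jet space applies. This is not true as stated: in product coordinates centred at $(x,y)$, the Taylor expansion of $\Psi^i_\lambda(x',y')=\lambda i(x')+(1-\lambda)i(y')$ contains \emph{no mixed monomials} in $x'$ and $y'$, for every embedding $i$. The jet-evaluation map therefore takes values in the proper linear subbundle of ``split'' jets, and varying $i$ yields a submersion onto that subbundle only, never onto the full jet space. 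Consequently one cannot simply invoke the fact that the non-stable $\mathcal A$-orbits have codimension $>\dim(M\times M)=4$ in $J^k(4,4)$; one must show that their intersections with the split subspace have codimension $>4$ \emph{inside the split subspace}, and determine which stable orbits meet that subspace in codimension $\le 4$ there. That computation is the real content of \cite{DRR} --- it is why the resulting list is $A_1,\dots,A_4$, $C^{\pm}_{2,2}$, and why, for instance, $C_{2,3}$ is excluded as in Remark \ref{noparabolic} --- and your outline does not supply it. Two smaller points: the Mather--Gaffney criterion and properness are not needed for \emph{local} stability, which concerns multi-germs at finite subsets of fibres, so your discussion of properness near $\Delta$ addresses a non-issue (the real reason for excising $\Delta$ is that the independence of the two local perturbations fails there); and the reduction to $\pi|_{\mathcal M_\lambda}$ buys nothing, since $\mathcal M_\lambda$ is a linearly transformed product, not a generic submanifold of $T\mathbb R^4$, so generic-projection theorems run into exactly the same obstruction.
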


\begin{defn}\label{genericembedding} We say that  $i: M^2 \to \mathbb R^4$ is a {\bf generic embedding} if $i \in \mathcal S.$\end{defn}

Because the codimension of each singularity of $\Psi_{\lambda}$ is at most $4$,  the possible stable singularities of affine equidistants of surfaces in $\mathbb R^4$ are: $$A_1, A_2, A_3, A_4 \  \  \mbox{for 1-parallelism,} \quad C^{\pm}_{2,2} \  \ \mbox{for 2-parallelism.}$$

For the reader's convenience, we recall the normal forms of these stable singularities $(\mathbb R^4,0) \to (\mathbb R^4,0)$ in the table below:

\begin{center}
\begin{table}[h]
    \begin{small}
    \noindent
    \begin{tabular}{|c|c|c|}
            \hline
    Notation &  Normal form  \\ \hline
      $ A_2$ & $(u_1, u_2, u_3, y^2)$\\
     $A_{\mu},  2\leq \mu\leq 4$ & $(u_1,u_2,u_3, y^{\mu+1}+\Sigma_{i=1}^{\mu-1} u_iy^{i})$ \\ \hline
     $C_{2,2}^{+}$ & $(u_1, u_2, x^2+u_1y, y^2+u_2x)$ \\ \hline
     $C_{2,2}^{-}$ & $(u_1, u_2, x^2-y^2, xy+u_1x+u_2y)$  \\ \hline
\end{tabular}
\end{small}
\end{table}
\end{center}

We refer to \cite{DRR}, where all possible stable singularities of affine equidistants are classified for submanifolds $M^n\subset \mathbb R^q$, with $(2n,q)$ an arbitrary pair of nice dimensions, for all possible degrees of parallelism.

In this paper, we focus on investigating the conditions for realizing these equidistant singularities $A_{\mu},\, 1\leq \mu \leq 4$ and $C^{\pm}_{2,2}$ from the extrinsic geometry of a  generic embedding  of smooth surface $M\subset \mathbb R^4$.

 In this specific case  we
substitute submanifold-germs $N_1$ and $N_2$ of Section \ref{revsing} by $N_1=M^+$ and $N_2=\mathcal R_0^{\lambda}(M^-)$, or equivalently by $N_1=M^-$ and $N_2=\mathcal R_0^{1-\lambda}(M^+)$, where $M_{+}$ is the surface-germ of $M$ around $a^{+}\in M\subset\mathbb R^4$ and $M_{-}$ is the surface-germ of $M$ around $a^{-}\in M\subset\mathbb R^4$, with $\lambda a^+ + (1-\lambda)a^- =0$.

\subsection{Bi-local geometry of weakly parallel pairs and singularities of affine equidistants}

We start by looking at the bi-local  geometry of $1$-parallel pairs.

Suppose $(a^+, a^-)$ is a pair of 1-parallel points. Then, we can choose coordinates in a neighbourhood of  $a^+$ and $a^-$ as follows:
\begin{equation}
\begin{split}
\Phi^+:&(\mathbb R^2,0) \to (\mathbb R^4,a^+)\\
            &(y,z)\mapsto a^+ +(y, z, \phi(y,z), \psi(y,z)),
\end{split}
\end{equation}
$j^1\phi (0,0)=j^1\psi (0,0)=0.$
\begin{equation}
\begin{split}
\Phi^-:& (\mathbb R^2,0) \to (\mathbb R^4,a^-)\\
 &(u,v)\mapsto a^- +(u,\xi (u,v), \zeta (u,v),v),
\end{split}
\end{equation}
$j^1\xi (0,0)=j^1\zeta (0,0)=0.$
In these coordinates, the local expression of the map $\Psi_{\lambda}|_{M\times M}$ is given by
\begin{eqnarray}  \Psi_{\lambda}|_{M\times M} &:& (\mathbb R^2,0)\times (\mathbb R^2,0) \to  (\mathbb R^4,0) \nonumber \\
& &((y,z),(u,v)) \mapsto  (\lambda y+ (1-\lambda)u,  \lambda z+ (1-\lambda)\xi(u,v), \nonumber  \\ & & \quad\quad\quad \lambda \phi(y,z)+ (1-\lambda)\zeta(u,v), \lambda \psi(y,z)+ (1-\lambda)v) \nonumber \end{eqnarray}
where,  to simplify, we have assumed $\lambda a^+ +(1-\lambda)a^{-}=0$, for fixed $\lambda$.

In order to construct the contact map, we first reflect $(M^-,a^-)$ through the point
$0$ to get ${\mathcal R}_0^{\lambda }(M^-),$ parametrized as
\begin{equation*}
{\mathcal R}_0^{\lambda }(\Phi ^-)(u,v)=a^+-({(1-\lambda )\over \lambda }u, {(1-\lambda )\over \lambda }\xi(u,v), {(1-\lambda )\over \lambda }\zeta (u,v),{(1-\lambda )\over \lambda }v).
\end{equation*}
The contact map ${\mathcal K^{\lambda}} :(\mathbb R^2,0)\to (\mathbb R^2,0)$ is then given by
\begin{equation}\label{eq:contactmap}
\begin{split}
\mathcal K^{\lambda} (y,z)=&(z+{1-\lambda \over \lambda }\xi ({-\lambda \over 1-\lambda }y,{-\lambda \over 1-\lambda }\psi (y,z)),\\
&\phi (y,z)+{1-\lambda \over \lambda }\zeta ({-\lambda \over 1-\lambda }y,{-\lambda \over 1-\lambda }\psi (y,z)).
\end{split}
\end{equation}

The following theorem distinguishes the $A_{\mu}, \,  1\leq \mu \leq 4$ singularities  of equidistants, in terms of the bi-local geometry of $M$.

\begin{thm}\label{prop-A_k} Let $a^+\in M^+$, $a^-\in M^-$, so that $\lambda a^+ + (1-\lambda)a^-=0$ is a singular point of $\Psi_{\lambda}|_{M\times M}$.
For a pair of vectors $({\bf u},{\bf v})$ in $\mathbb R^4$, such that  ${\bf u}$ is in the direction of $1$-parallelism of $(a^+,a^-)$ and ${\bf v}\in N_{a^+}M^+\cap N_{a^-}M^-$ is in the common normal direction, let $\eta_{+}$ and $\eta_{-}$ be the normal curvature of $M^{+}$ and $\mathcal R_0^{\lambda}(M^-)$ along {\bf v} in the common direction ${\bf u}$. Then $0$ is a singular point of $\Psi_{\lambda}|_{M\times M}$ of type $A_k$ if and only if
\begin{eqnarray}
\label{eq:curvatures}
\quad\quad \eta_{+}^{(j)}(0)&=&(-1)^{j+1}\left( \frac{\lambda}{1-\lambda}\right)^{j+1} \eta_{-}^{(j)}(0) \ ,\, j=0, \ldots, k-1, \\
\eta_{+}^{(k)}(0)&\neq&(-1)^{k+1}\left( \frac{\lambda}{1-\lambda}\right)^{k+1} \eta_{-}^{(k)}(0) \ ,\, \label{other}
\end{eqnarray} where $\eta_{+}^{(j)}$ and $\eta_{-}^{(j)}$ denote the $j$-order derivatives of $\eta_{+}$ and $\eta_{-}$ respectively.

\end{thm}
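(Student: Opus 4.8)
The plan is to reduce the statement to a $\mathcal{K}$-classification of the contact map $\mathcal{K}^\lambda$ of \eqref{eq:contactmap}, and then to express the $\mathcal{K}$-class of that map in terms of normal curvatures. Recall that by Remark~\ref{psilambda} and the discussion preceding Section~4, the singularity of $\Psi_\lambda|_{M\times M}$ at $0$ is of type $A_k$ exactly when the contact between $M^+$ and $\mathcal{R}_0^\lambda(M^-)$ at $a^+$ is of type $A_k$, which by the theorem of Montaldi is the $\mathcal{K}$-equivalence class of $\mathcal{K}^\lambda:(\mathbb{R}^2,0)\to(\mathbb{R}^2,0)$. Since $(a^+,a^-)$ is $1$-parallel, the corank of $\mathcal{K}^\lambda$ is $1$; thus $\mathcal{K}^\lambda$ is $\mathcal{K}$-equivalent to a map $(y,z)\mapsto(z, h(y,z))$ for a function $h$ with $h(0,0)=0$ and $dh(0,0)=0$, and the $A_k$-class is governed by the order of vanishing of $h(y,0)$ — more precisely, after absorbing the first coordinate, by the lowest-order term of $g(y):=h(y, \sigma(y))$ where $z=\sigma(y)$ solves the first equation; one checks $A_k$ occurs iff $g$ vanishes to order exactly $k+1$ at $y=0$.

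First I would normalize coordinates. We may choose the orthonormal frame $\{\mathbf{e}_1,\dots,\mathbf{e}_4\}$ so that $\mathbf{u}=\mathbf{e}_1$ (the $1$-parallel tangent direction, which by Remark~\ref{tan-par} is the tangent direction shared by $M^+$ and $\mathcal{R}_0^\lambda(M^-)$) and $\mathbf{v}=\mathbf{e}_4$ (the common normal direction spanning $N_{a^+}M^+\cap N_{a^-}M^-$); the graph coordinates in the statement are already adapted to this, with $M^+$ a graph over the $(y,z)$-plane and $M^-$ a graph over the $(u,v)$-plane in such a way that the parallelism along $\mathbf{e}_1$ and the common normal $\mathbf{e}_4$ are realized. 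Along the direction $\mathbf{u}=\mathbf{e}_1$ and the binormal $\mathbf{v}=\mathbf{e}_4$, the normal curvature of $M^+$ is (up to the usual first-order-tangent simplification) the second derivative of the height function $\psi$ along $y$ at $z=0$ on the appropriate curve, i.e. $\eta_+(t) = \psi_{yy}$ evaluated along the normal section in direction $\mathbf{e}_1$; similarly for $\mathcal{R}_0^\lambda(M^-)$, whose defining height along $\mathbf{e}_4$ is, by the formula ${\mathcal R}_0^{\lambda}(\Phi^-)$, the function $-\tfrac{1-\lambda}{\lambda}v$ pulled back, giving a normal curvature $\eta_-$ that carries a factor $-\tfrac{1-\lambda}{\lambda}$ relative to that of $M^-$ itself.

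Next I would compute the Taylor expansion of $g(y)$ in \eqref{eq:contactmap}. Substituting $z=\sigma(y)=O(y^2)$ into the second component of $\mathcal{K}^\lambda$ and using $j^1\phi=j^1\psi=j^1\xi=j^1\zeta=0$, the lowest-order terms come from $\phi(y,0)$ and $\tfrac{1-\lambda}{\lambda}\zeta(-\tfrac{\lambda}{1-\lambda}y,0)$; the rescaling of the argument by $-\tfrac{\lambda}{1-\lambda}$ inside $\zeta$ produces, at each order $j$, a factor $(-\tfrac{\lambda}{1-\lambda})^{j}$, and the overall prefactor $\tfrac{1-\lambda}{\lambda}$ bumps this to $(-1)^{j}(\tfrac{\lambda}{1-\lambda})^{j-1}= (-1)^{j}(\tfrac{\lambda}{1-\lambda})^{j}\cdot\tfrac{1-\lambda}{\lambda}$. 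Matching this against the identification of $\phi$-derivatives with $\eta_+$-derivatives and $\zeta$-derivatives with $\eta_-$-derivatives (each the second derivative of the corresponding height function, differentiated $j$ further times), the coefficient of $y^{j+2}$ in $g$ becomes a constant multiple of $\eta_+^{(j)}(0) - (-1)^{j+1}(\tfrac{\lambda}{1-\lambda})^{j+1}\eta_-^{(j)}(0)$. Hence $g$ vanishes to order exactly $k+1$ — equivalently the $j=0,\dots,k-1$ coefficients vanish and the $k$-th does not — precisely under conditions \eqref{eq:curvatures}--\eqref{other}, which finishes the proof.

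\textbf{Main obstacle.} The delicate point is the bookkeeping in the last paragraph: correctly tracking how the chain rule through the rescaled-and-reflected arguments $(-\tfrac{\lambda}{1-\lambda}y, -\tfrac{\lambda}{1-\lambda}\psi(y,z))$ distributes powers of $\tfrac{\lambda}{1-\lambda}$ and signs across orders, and — more subtly — verifying that the substitution $z=\sigma(y)$ (needed to bring the corank-$1$ map to the reduced form $(z,g(y))$) does not contaminate the coefficient of $y^{j+2}$ in $g$ with lower-order curvature data, i.e. that the cross terms involving $\psi$, $\xi$ only enter at strictly higher order than the leading $\phi,\zeta$ contribution at each stage. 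This requires an inductive argument: assuming \eqref{eq:curvatures} holds up to order $j-1$ forces $\sigma$ and the auxiliary corrections to agree to high enough order that the order-$j$ comparison is genuinely between $\eta_+^{(j)}(0)$ and the scaled $\eta_-^{(j)}(0)$ alone. I would also need to pin down precisely the normalization relating $\eta_\pm^{(j)}$ to derivatives of the height functions $\phi,\psi$ (resp. $\zeta$ and the reflected fourth coordinate), since normal curvature of a normal section involves the arc-length parametrization of $\gamma_\theta$, but at the relevant orders the first-order flatness $j^1(\cdot)=0$ makes these corrections higher-order and harmless.
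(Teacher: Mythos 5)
Your proposal follows essentially the same route as the paper: reduce to the contact map (\ref{eq:contactmap}), solve its first component for $z=z(y)$ to obtain a one-variable function whose order of vanishing detects the $A_k$ type, and then match its Taylor coefficients --- with the alternating signs and powers of $\tfrac{\lambda}{1-\lambda}$ produced by the chain rule through the reflected, rescaled argument --- against derivatives of the normal curvatures $\eta_\pm$. One slip worth fixing: in the adapted coordinates the common normal $N_{a^+}M^+\cap N_{a^-}M^-$ is spanned by ${\bf e}_3$, not ${\bf e}_4$, so the relevant height functions are $\phi$ and $\zeta$ (which your computation paragraph in fact uses), not $\psi$ and the fourth coordinate of ${\mathcal R}_0^{\lambda}(\Phi^-)$ --- the latter is linear in $v$ and would give $\eta_-\equiv 0$, making the statement vacuous; as written your middle paragraph contradicts your final computation, though the computation is the correct one.
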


\begin{proof}
We can solve the first equation $\mathcal K^{\lambda}_1=0$ in (\ref{eq:contactmap}), as $z=z(y)$,
so that the contact map $\mathcal K^{\lambda}$ is $\mathcal K$-equivalent to the suspension of
\begin{equation}
\begin{split}
\theta_{\lambda} : &\mathbb R\to \mathbb R\\
&y\mapsto \phi(y,z(y))+{1-\lambda \over \lambda }\zeta({-\lambda \over 1-\lambda }y,{-\lambda \over 1-\lambda }\psi (y,z(y))).
\end{split}
\end{equation}
The point $0$ is a singularity of type $A_k$ of $\theta_{\lambda} $ if and only if
\begin{eqnarray}
\label{eq:derivatives}
\frac{\partial^j \phi_{}}{\partial y^j}(0)&=&(-1)^{j-1}\left( \frac{\lambda}{1-\lambda}\right)^{j-1} \frac{\partial^{j} \zeta_{}}{\partial y^j}(0),\, j=1, \ldots, k, \\
\frac{\partial^j \phi_{}}{\partial y^j}(0)&\neq&(-1)^{j-1}\left( \frac{\lambda}{1-\lambda}\right)^{j-1} \frac{\partial^{j} \zeta_{}}{\partial y^j}(0),\, j=k+1, \label{other2}
\end{eqnarray}
noting that condition (\ref{eq:derivatives}) for $j=1$ is the condition of $1$-parallelism.

Letting $\alpha_{+}$ and $\alpha_{-}$ be curves in $M^+$ and ${\mathcal R}_0^{\lambda }(M^-)$ given by
\begin{equation*}
\begin{split}
\alpha _+(y)=&(y,z(y),\phi (y,z(y),\psi (y,z(y))\\
\alpha _- (y)=&(y, {\lambda -1\over \lambda} \xi ({\lambda \over \lambda -1}y,{\lambda \over \lambda -1}\psi(y,z(y)),\\
&{\lambda -1\over \lambda}\zeta ({\lambda \over \lambda -1}y,{\lambda \over \lambda -1}\psi(y,z(y))), \psi (y,z(y)))
\end{split}
\end{equation*}
and letting $\eta _{+}(y)$ and $\eta _{-}(y)$ be the projections of the
normal curvatures of $\alpha_{+}$ and $\alpha_{-}$ in the common
normal direction {\bf v}, then
$$
\eta _{+}(y)={\partial^2\phi \over \partial y^2}(y,z(y)) \  {\rm and} \  \eta _{-}(y)={\partial^2\zeta \over \partial y^2}(y,z(y)).
$$
So, equations (\ref{eq:derivatives})-(\ref{other2}) reduce to equations (\ref{eq:curvatures})-(\ref{other}).
\end{proof}

We now look at the bi-local description of $2$-parallel pairs.

Suppose $(a^+, a^-)$ is a pair of 2-parallel points. Then, we can choose coordinates in a neighbourhood of  $a^+$ and $a^-$ as follows:
\begin{equation}
\begin{split}
\Phi^+:& (\mathbb R^2,0) \to (\mathbb R^4,a^+)\\
 & (y,z)\mapsto a^+ + (y, z, \phi(y,z), \psi(y,z)),
\end{split}
\end{equation}
$j^1\phi (0,0)=j^1\psi (0,0)=0.$
\begin{equation}
\begin{split}
\Phi^-:& (\mathbb R^2,0) \to (\mathbb R^4,a^-)\\
 & (u,v)\mapsto a^- + (u, v,\xi (u,v), \zeta (u,v)),
\end{split}
\end{equation}
$j^1\xi (0,0)=j^1\zeta (0,0)=0.$

Again, for simplicity we assume that for $\lambda$ fixed, $\lambda a^+ +(1-\lambda)a^{-}=0$.
Now the contact map ${\mathcal K^{\lambda}} :(\mathbb R^2,0)\to (\mathbb R^2,0)$ is
\begin{equation}
\begin{split}
\label{eq:contactmap2}
\mathcal K^{\lambda} (y,z)=&(\phi (y,z)+{1-\lambda \over \lambda }\xi ({-\lambda \over 1-\lambda }y,{-\lambda \over 1-\lambda }z),\\
&\psi (y,z)+{1-\lambda \over \lambda }\zeta ({-\lambda \over 1-\lambda }y,{-\lambda \over 1-\lambda }z)
\end{split}
\end{equation}
Let the contact surface $\mathcal C^{\lambda}\subset\mathbb R^4$ be the graph of the contact map $\mathcal K^{\lambda}.$

If $0\in \mathcal C^{\lambda}\subset\mathbb R^4$ is a singular point of type $C^+_{2,2}$ of the contact map $\mathcal K^{\lambda}$, then $\Delta_{\mathcal C^{\lambda}}(0)<0$ \cite{MRR}. It follows that $\mathcal C^{\lambda}$ has two contact pairs at $0$. For each of these, we have the following:

\begin{thm}\label{c+} Let $a^+\in M^+$, $a^-\in M^-$, so that $\lambda a^+ + (1-\lambda)a^- =0\in \mathcal C^{\lambda}\subset\mathbb R^4$ is a singular point of $\mathcal K^{\lambda}$ of type $C^+_{2,2}$. The pair $({\bf u}, {\bf v})$ is a contact pair of $\mathcal C^{\lambda}$ at $0$ if and only if one of the following holds.

(i) The pair $({\bf u}, {\bf v})$ is a contact pair of $M^+$ and of $\mathcal R_0^{\lambda}(M^-)$ at $a^+$ \big(equivalently, $({\bf u}, {\bf v})$ is a contact pair of $M^-$ and of $\mathcal R_0^{1-\lambda}(M^+)$ at $a^-$\big).

(ii) The pair $({\bf u}, {\bf v})$ is not a contact pair of either $M^+$ or $\mathcal R_0^{\lambda}(M^-)$ at $a^+$, but the normal curvatures of $M^+$ and $\mathcal R_0^{\lambda}(M^-)$ along $\bf u$ in the direction of $\bf v$ are in  proportion $\frac{\lambda}{1-\lambda}$ at $a^+$ \big(equivalently, $({\bf u}, {\bf v})$ is not a contact pair of either $M^-$ or $\mathcal R_0^{1-\lambda}(M^+)$ at $a^-$, but the normal curvatures of $M^-$ and $\mathcal R_0^{1-\lambda}(M^+)$ along $\bf u$ in the direction of $\bf v$ have the proportion $\frac{1-\lambda}{\lambda}$ at $a^-$\big).
\end{thm}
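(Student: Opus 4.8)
\textbf{Proof plan for Theorem \ref{c+}.}

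The plan is to work directly with the contact map $\mathcal K^{\lambda}$ in \eqref{eq:contactmap2} and compute its second fundamental form (in the sense of Section 3) at $0$ in terms of the second fundamental forms of $M^+$ and of $\mathcal R_0^{\lambda}(M^-)$. First I would record the $2$-jets of the four generating functions: write $\phi(y,z)=\tfrac12(a_+y^2+2b_+yz+c_+z^2)+\dots$, $\psi=\tfrac12(e_+y^2+2f_+yz+g_+z^2)+\dots$, and similarly $\xi,\zeta$ with coefficients $a_-,b_-,c_-,e_-,f_-,g_-$. Substituting the linear rescaling $(u,v)=(-\tfrac{\lambda}{1-\lambda}y,-\tfrac{\lambda}{1-\lambda}z)$ into $\xi,\zeta$ multiplies every quadratic coefficient by $(\tfrac{\lambda}{1-\lambda})^2$, and the prefactor $\tfrac{1-\lambda}{\lambda}$ then leaves an overall factor $\tfrac{\lambda}{1-\lambda}$. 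Hence the $2$-jet of $\mathcal K^{\lambda}$ has second-fundamental-form matrix
\[
\alpha_{\mathcal C^\lambda}=\alpha_{M^+}+\tfrac{\lambda}{1-\lambda}\,\alpha_{\mathcal R_0^\lambda(M^-)}
\]
in the natural coordinates (here $\alpha_{M^+}$ is the matrix $\alpha$ of Section 3 for $M^+$, and since reflection through $0$ rescales $\Phi^-$ by $-\tfrac{1-\lambda}{\lambda}$, the matrix $\alpha_{\mathcal R_0^\lambda(M^-)}$ is $-\tfrac{1-\lambda}{\lambda}\alpha_{M^-}$, so equivalently $\alpha_{\mathcal C^\lambda}=\alpha_{M^+}-\alpha_{M^-}$ up to the reflection bookkeeping — I would fix the signs carefully here).

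Next, recall that a pair $(\mathbf u,\mathbf v)$ with $\mathbf u\in T_0\mathcal C^\lambda$, $\mathbf v\in N_0\mathcal C^\lambda$ is a contact pair exactly when $\mathbf v$ is a binormal direction, i.e. the scalar quadratic form $\mathrm{II}^{\mathbf v}$ built from $\alpha_{\mathcal C^\lambda}$ is degenerate, with $\mathbf u$ in its kernel. Writing $\mathbf v=(\cos\omega,\sin\omega)$ in the normal frame, $\mathrm{II}^{\mathbf v}_{\mathcal C^\lambda}(\mathbf u)=\mathrm{II}^{\mathbf v}_{M^+}(\mathbf u)+\tfrac{\lambda}{1-\lambda}\mathrm{II}^{\mathbf v}_{\mathcal R_0^\lambda(M^-)}(\mathbf u)$ by linearity of the above matrix identity. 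So $(\mathbf u,\mathbf v)$ is a contact pair of $\mathcal C^\lambda$ iff this sum of two binary quadratic forms in $\mathbf u$ is degenerate with $\mathbf u$ in its kernel. The two cases of the theorem are then the two ways this can happen: either each summand separately vanishes at $\mathbf u$ and is degenerate there (case (i): $(\mathbf u,\mathbf v)$ is a common contact pair), or neither summand is degenerate at $\mathbf v$ but the two nondegenerate forms $\mathrm{II}^{\mathbf v}_{M^+}$ and $\mathrm{II}^{\mathbf v}_{\mathcal R_0^\lambda(M^-)}$ are proportional and $\mathbf u$ is a common zero — and the normal curvature of $M^+$ along $\mathbf u$ in direction $\mathbf v$ being $\mathrm{II}^{\mathbf v}_{M^+}(\mathbf u)/|\mathbf u|^2$, the proportionality constant $-\tfrac{\lambda}{1-\lambda}$ between the quadratic forms translates, after undoing the reflection rescaling $\alpha_{\mathcal R_0^\lambda(M^-)}=-\tfrac{1-\lambda}{\lambda}\alpha_{M^-}$, into the ratio $\tfrac{\lambda}{1-\lambda}$ of normal curvatures of $M^+$ and $\mathcal R_0^\lambda(M^-)$ stated in (ii). The "equivalently" parenthetical follows by the symmetry $\mathcal R_0^{1-\lambda}\circ\mathcal R_0^\lambda=\mathrm{id}$ noted in Section 2, which interchanges the roles of $a^+$ and $a^-$ and replaces $\tfrac{\lambda}{1-\lambda}$ by its reciprocal.

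Finally, I would justify that these are the \emph{only} possibilities. Since $0$ is a $C^+_{2,2}$ point we are told $\Delta_{\mathcal C^\lambda}(0)<0$, so $0$ is a hyperbolic point of $\mathcal C^\lambda$ and by the Proposition quoted from \cite{MRR} there are exactly two contact pairs; I must show the casework above produces precisely these. The main obstacle I anticipate is the sign and normalization bookkeeping: keeping straight the factor $-\tfrac{1-\lambda}{\lambda}$ coming from reflection through $0$, the factor $\tfrac{1-\lambda}{\lambda}$ in front of $\xi,\zeta$ in \eqref{eq:contactmap2}, and the quadratic rescaling of the argument, so that the ratio of \emph{normal curvatures} (which carries the reflection-induced sign) comes out as the clean positive proportion $\tfrac{\lambda}{1-\lambda}$ rather than its negative or reciprocal. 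A secondary point requiring care is the degenerate sub-case where one of $\mathrm{II}^{\mathbf v}_{M^+}$, $\mathrm{II}^{\mathbf v}_{\mathcal R_0^\lambda(M^-)}$ is identically zero (an inflection-type situation for one of the germs): one should check whether the genericity of the embedding (Definition \ref{genericembedding}) and the hypothesis $\Delta_{\mathcal C^\lambda}(0)<0$ exclude it, or whether it is subsumed in case (i); I expect it is excluded generically, but this deserves an explicit remark.
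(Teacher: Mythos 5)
Your plan follows essentially the same route as the paper's proof. The paper normalizes $\mathbf{u}=(1,0,0,0)$, $\mathbf{v}$ a coordinate normal direction, observes that the binormal condition forces $\frac{\partial^2\mathcal K^{\lambda}_2}{\partial y^2}(0)=0$, i.e.\ $\psi_{yy}(0)=-\frac{\lambda}{1-\lambda}\,\zeta_{yy}(0)$ after the chain rule, and then splits into the two cases according to whether these two second derivatives both vanish or neither does. Your matrix identity $\alpha_{\mathcal C^\lambda}=\alpha_{M^+}+\frac{\lambda}{1-\lambda}\alpha_{M^-}$ (in the unreflected coordinates) is exactly this computation assembled into one formula, and your concern about the sign/normalization of $\alpha_{\mathcal R_0^\lambda(M^-)}$ is well placed: since $\mathcal R_0^\lambda$ is a homothety by $-\frac{1-\lambda}{\lambda}$, the "normal curvature of $\mathcal R_0^\lambda(M^-)$" depends on which parametrization one second-differentiates, and the paper itself is not explicit about this when it asserts the ratio $\frac{\lambda}{1-\lambda}$.

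There is, however, one genuine confusion in your case analysis. You describe case (ii) as "the two nondegenerate forms $\mathrm{II}^{\mathbf v}_{M^+}$ and $\mathrm{II}^{\mathbf v}_{\mathcal R_0^\lambda(M^-)}$ are proportional and $\mathbf u$ is a common zero". Neither clause follows from (nor is needed for) the degeneracy of the sum: e.g.\ the nondegenerate, non-proportional forms $y^2+2yz$ and $-y^2-2yz+z^2$ sum to the degenerate form $z^2$ with kernel $\partial_y$, and their values at $\partial_y$ are $1$ and $-1$. Moreover, if $\mathbf u$ were a common zero of both forms, both normal curvatures along $\mathbf u$ would vanish, which is the situation of case (i) (not (ii)) and makes the asserted nonzero proportion vacuous. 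The correct dichotomy — the one the paper actually uses — involves only the scalar values: either $\mathrm{II}^{\mathbf v}_{M^+}(\mathbf u,\mathbf u)$ and $\mathrm{II}^{\mathbf v}_{\mathcal R_0^\lambda(M^-)}(\mathbf u,\mathbf u)$ both vanish (leading to case (i)), or both are nonzero and stand in the fixed ratio dictated by $\mathrm{II}^{\mathbf v}_{\mathcal C^\lambda}(\mathbf u,\mathbf u)=0$. With that correction your plan goes through, and your two closing observations — that the count of exactly two contact pairs from $\Delta_{\mathcal C^\lambda}(0)<0$ is what closes the "only if" direction, and that the sub-case where one of the forms is degenerate needs to be excluded by genericity or absorbed into (i) — identify precisely the points that the paper's half-page proof leaves implicit.
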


\begin{proof}
Let $({\bf u}, {\bf v})$ be a contact pair of the contact
surface $\mathcal C^{\lambda}.$ Without loss of generality we can take ${\bf u}=(1,0,0,0)$ and ${\bf v}=(0,0,1,0).$ Then, since ${\bf v}$ is a binormal direction, it follows that the hessian of the function germ $$\mathcal K^{\lambda}_2(y,z)=\psi (y,z)+{1-\lambda \over \lambda }\zeta ({-\lambda \over 1-\lambda }y,{-\lambda \over 1-\lambda }z)$$ is degenerate and ${\bf u}$ is its kernel.
Then $\frac{\partial^2 \mathcal K^{\lambda}_2}{\partial y^2}(0)=0,$ hence
$$\frac{\partial^2 \psi}{\partial y^2}(0)=-{\lambda \over 1-\lambda } \frac{\partial^2 \zeta}{\partial y^2}(0).$$ As in the proof of Theorem \ref{prop-A_k}, either $\frac{\partial^2 \psi}{\partial y^2}(0)=0$ and $\frac{\partial^2 \zeta}{\partial y^2}(0)=0$ or they are not zero, but the normal curvatures of $M^+$ and $\mathcal R_{0}^{\lambda}(M^-)$ along {\bf v} in the direction of {\bf u} are proportional. Similar statement holds for $M^-$ and $\mathcal R_{0}^{1-\lambda}(M^+).$
\end{proof}


If $0\in \mathcal C^{\lambda}\subset\mathbb R^4$ is a singular point of type $C^-_{2,2}$, then $\Delta_{\mathcal C^{\lambda}}(0)>0$ \cite{MRR}. It follows that $\mathcal C^{\lambda}$ has no contact pairs at $0$. We thus have:

\begin{thm}\label{c-} Let $a^+\in M^+$, $a^-\in M^-$, so that $\lambda a^+ + (1-\lambda)a^- =0\in \mathcal C^{\lambda}\subset\mathbb R^4$ is a singular point of type $C^-_{2,2}$. Although $a^+\in M^+$ and $a^-\in M^-$ are strongly parallel points, both of the following holds true.

(i) ${M^+}$ and $\mathcal R_0^{\lambda}(M^-)$ do not have any common contact pair at $a^+$ \big(or equivalently,  ${M^-}$ and $\mathcal R_0^{1-\lambda}(M^+)$ do not have any common contact pair at $a^-$\big).

(ii) There is no pair $({\bf u}, {\bf v})\in\mathbb R^4$ with ${\bf u}\in T_{a^+}M^{+}$ and  ${\bf v}\in N_{a^+}M^{+}$, such that the normal curvature along $\bf u$ in the $\bf v$ direction of  ${M^+}$ and of $\mathcal R_0^{\lambda}(M^-)$ are in proportion $\frac{\lambda}{1-\lambda}$ at $a^+$ \big(or equivalently,  the normal curvature along $\bf u$ in the $\bf v$ direction of  ${M^-}$ and of $\mathcal R_0^{1-\lambda}(M^+)$ are in proportion $\frac{1-\lambda}{\lambda}$ at $a^-$\big).

\end{thm}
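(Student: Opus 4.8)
The plan is to mirror the structure of the proof of Theorem \ref{c+}, but with the logical signs of the conclusions reversed, exploiting the known fact that a $C^-_{2,2}$ singularity forces $\Delta_{\mathcal C^{\lambda}}(0)>0$, i.e.\ the contact surface $\mathcal C^{\lambda}$ is \emph{elliptic} at $0$ and hence, by \cite{MRR}, Lemma 3.2 (recalled in the last Proposition of Section 3), has \emph{no} contact pairs at $0$. So the first step is to record this: from the normal form $C^-_{2,2}=(u_1,u_2,x^2-y^2,xy+u_1x+u_2y)$ one reads off directly that the contact map $\mathcal K^{\lambda}$, being $\mathcal K$-equivalent to this normal form, has $\Delta_{\mathcal C^{\lambda}}(0)>0$; then the cited Proposition gives that there is no pair $({\bf u},{\bf v})$ with ${\bf u}\in T_0\mathcal C^{\lambda}$, ${\bf v}\in N_0\mathcal C^{\lambda}$, ${\bf v}$ a binormal direction and ${\bf u}$ an asymptotic direction for ${\bf v}$. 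This is the geometric content we must translate into bi-local statements about $M^+$ and $\mathcal R_0^{\lambda}(M^-)$.

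Next I would run the same coordinate computation as in Theorem \ref{c+}. Fix an arbitrary candidate pair; after an affine change we may take ${\bf u}=(1,0,0,0)$ and ${\bf v}=(0,0,1,0)$, so that $N_0\mathcal C^{\lambda}$ is spanned by the last two coordinate directions and the two components of the contact map (\ref{eq:contactmap2}) are the ``heights'' over $T_0\mathcal C^{\lambda}$ in those normal directions. As in that proof, the normal curvature of $\mathcal C^{\lambda}$ along ${\bf u}$ in the direction ${\bf v}$ is
\[
\kappa_{\mathcal C^{\lambda}}=\frac{\partial^2\mathcal K^{\lambda}_2}{\partial y^2}(0)
=\frac{\partial^2\psi}{\partial y^2}(0)+\frac{\lambda}{1-\lambda}\,\frac{\partial^2\zeta}{\partial y^2}(0),
\]
while the normal curvatures of $M^+$ and of $\mathcal R_0^{\lambda}(M^-)$ along ${\bf u}$ in the direction ${\bf v}$ are $\dfrac{\partial^2\psi}{\partial y^2}(0)$ and $-\dfrac{\lambda}{1-\lambda}\dfrac{\partial^2\zeta}{\partial y^2}(0)$, respectively (the sign and factor coming from the reflection ${\mathcal R}_0^{\lambda}$, exactly as in (\ref{eq:contactmap})). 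For $({\bf u},{\bf v})$ to be a \emph{common} contact pair of $M^+$ and $\mathcal R_0^{\lambda}(M^-)$, both of these must vanish; for the weaker condition in (ii) to hold, their ratio must be $\frac{\lambda}{1-\lambda}$, equivalently $\dfrac{\partial^2\psi}{\partial y^2}(0)=-\,\dfrac{\lambda}{1-\lambda}\dfrac{\partial^2\zeta}{\partial y^2}(0)$. In either case we get $\kappa_{\mathcal C^{\lambda}}=0$, i.e.\ ${\bf v}$ is a binormal direction of $\mathcal C^{\lambda}$ with asymptotic direction ${\bf u}$, i.e.\ $({\bf u},{\bf v})$ is a contact pair of $\mathcal C^{\lambda}$ at $0$ — contradicting Step 1. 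This contradiction proves both (i) and (ii). The equivalent ``$a^-$'' formulations follow by the symmetry $\lambda\leftrightarrow 1-\lambda$, $M^+\leftrightarrow M^-$ already used throughout Section 2.

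The one point requiring a little care — and the main obstacle — is the bookkeeping of tangent versus normal directions of the \emph{contact surface} $\mathcal C^{\lambda}$ as opposed to those of $M^+$. A contact pair of $\mathcal C^{\lambda}$ involves ${\bf u}\in T_0\mathcal C^{\lambda}$ and ${\bf v}\in N_0\mathcal C^{\lambda}$, and one must check that the identifications made in the normalization (${\bf u}=e_1$, ${\bf v}=e_3$) are simultaneously compatible with reading off the second fundamental forms of $M^+$ and of $\mathcal R_0^{\lambda}(M^-)$ along the same ${\bf v}$ — this works because $0$ is a strongly parallel point, so $T_{a^+}M^+$, $T_{a^-}M^-$ and $T_0\mathcal C^{\lambda}$ are all identified (after reflection) with the same $2$-plane, and the common normal $2$-plane carries all the quadratic forms in question. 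Once this identification is set up, the computation is the same one-line Hessian computation as in Theorem \ref{c+}, only now used contrapositively. I would also remark explicitly that statement (ii) here is genuinely stronger than the mere failure of (ii) of Theorem \ref{c+}: it asserts non-existence of such a pair for \emph{every} ${\bf u}\in T_{a^+}M^+$, which is exactly what the ellipticity $\Delta_{\mathcal C^{\lambda}}(0)>0$ delivers, since it forbids \emph{any} real asymptotic direction.
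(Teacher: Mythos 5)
Your argument is correct and is essentially the paper's own: the paper gives no separate proof of Theorem \ref{c-}, but simply records that a $C^-_{2,2}$ point has $\Delta_{\mathcal C^{\lambda}}(0)>0$, hence $\mathcal C^{\lambda}$ has no contact pairs at $0$, and lets the Hessian computation from the proof of Theorem \ref{c+} do the rest. Your write-up just makes explicit the contrapositive reading of that computation (any pair satisfying (i) or (ii) would produce a contact pair of $\mathcal C^{\lambda}$), together with the identification of the tangent and normal planes of $M^+$, $\mathcal R_0^{\lambda}(M^-)$ and $\mathcal C^{\lambda}$, which the paper leaves implicit.
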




\begin{rem}\label{noparabolic} Generically, $\Delta_{\mathcal C^{\lambda}}\neq 0$ because singular points of  $\mathcal C^{\lambda}\subset\mathbb R^4$ of type $C_{2,3}$ are not unfolded to a stable point of $\Psi_{\lambda}$ (\cite{DRR}). \end{rem}

\section{Geometry of the set of weakly parallel points}

We now extend our geometric investigations in order to describe the set of weakly parallel points of $M$, as this set is naturally related to the set of affine equidistants of $M$ and its singularities.

\subsection{Grassmannian investigation of weakly parallel points}
We start by using the Grassmannian $Gr(2,4)$, the space of  $2$-planes in $\mathbb R^4$.

First,
we  recall the Pl\"ucker coordinates for $Gr(2,4)$.
Let $\bf e_1,e_2,e_3,e_4$ be any basis for $\mathbb R^4$ (not necessarily orthonormal or orthogonal,  no metric is needed or assumed here).  Then, $\bf e_1\wedge e_2, e_1\wedge e_3, e_1\wedge e_4, e_2\wedge e_3, e_2\wedge e_4, e_3\wedge e_4$ is a basis for $\Lambda^2\mathbb R^4$ and we denote by $(p_{12},p_{13},p_{14},p_{23},p_{24},p_{34})$ the coordinates of an element $\pi\in \Lambda^2\mathbb R^4$ in the above basis.

If the bi-vector $\pi\in  \Lambda^2\mathbb R^4$ with coordinates $(p_{12},p_{13},p_{14},p_{23},p_{24},p_{34})$ represents  an element in $Gr(2,4)$, then the bi-vector $\pi'\in \Lambda^2\mathbb R^4$ with coordinates $(kp_{12},kp_{13},kp_{14},kp_{23},kp_{24},kp_{34})$, $0\neq k\in\mathbb R$, represents the same element in $Gr(2,4)$. Thus, defining the equivalence class $[\pi]=\{\pi' \in \Lambda^2\mathbb R^4 \ | \ \pi'=k\pi, k\in\mathbb R^*\}$, it follows that
$[\pi]\in  \mathbb P( \Lambda^2\mathbb R^4)$ has homogeneous coordinates $[p_{12},p_{13},p_{14},p_{23},p_{24},p_{34}]$.

However, not every element $[\pi]\in\mathbb P( \Lambda^2\mathbb R^4)$ lies in $Gr(2,4)$. $\pi$ is in $Gr(2,4)$ iff $\pi$ is an elementary bi-vector, i.e.  $\pi=\bf u\wedge v$, for some ${\bf u,v}\in\mathbb R^4$. Thus $[\pi]\in Gr(2,4)$ iff
$$\pi\wedge\pi=0.$$
In terms of the above coordinates, this translates into the equation
\begin{equation}\label{Plucker}
p_{12}p_{34} + p_{23}p_{14} - p_{13}p_{24}=0.
\end{equation}
The homogeneous coordinates $[p_{12},p_{13},p_{14},p_{23},p_{24},p_{34}]$ subject to constraint (\ref{Plucker}) are  the Pl\"ucker coordinates of $[\pi]\in Gr(2,4)$ with respect to the basis $\bf e_1,e_2,e_3,e_4$ of $\mathbb R^4$. It follows that $dim_{\mathbb R}(Gr(2,4))=4$.

Now, consider the Gauss map $$G :  M\to Gr(2,4) \ , \ \mathbb R^4\supset M\ni a\mapsto [T_aM]\in Gr(2,4).$$

The Gauss map fails to be injective precisely for (non-diagonal) strongly parallel pairs, i.e, $a_1\neq a_2\in M$, such that $G(a_1)=G(a_2)$. Thus, for a residual set of embeddings $M\subset\mathbb R^4$, $G:M\to Gr(2,4)$ is an immersion with transversal double points and such a $[\pi]\in G(M)$ whose neighborhood in $G(M)$ is not homeomorphic to $\mathbb R^2$ is the common tangent plane for a (non-diagonal) $2$-parallel pair $(a_1, a_2)\in M\times M$.

Consider also the double Gauss map:
$$G\times G: M\times M\to  Gr(2,4)\times Gr(2,4) \ , \
(a_1,a_2)\mapsto ([\pi_1], [\pi_2])$$
Then, $[\pi_1]$ and $[\pi_2]$ are weakly parallel, iff
\begin{equation}\label{weak} \pi_1\wedge\pi_2=0.\end{equation}
And we denote
$$W= \{([\pi_1],[\pi_2]) \in Gr(2,4)\times Gr(2,4) | \ \pi_1\wedge\pi_2=0\}.$$

In terms of the Pl\"ucker coordinates for  $Gr(2,4)$,
\begin{equation}\label{pi1} [\pi_1]= [p_{12},p_{13},p_{14},p_{23},p_{24},p_{34}] \ , \ p_{12}p_{34} + p_{23}p_{14} - p_{13}p_{24}=0 \ ,\end{equation}
\begin{equation}\label{pi2} [\pi_2]= [q_{12},q_{13},q_{14},q_{23},q_{24},q_{34}] \ , \ q_{12}q_{34} + q_{23}q_{14} - q_{13}q_{24}=0  \ , \end{equation}
condition (\ref{weak}) translates into
\begin{equation}\label{weakPlucker}
p_{12}q_{34} + p_{34}q_{12} + p_{14}q_{23} + p_{23}q_{14} - p_{13}q_{24} - p_{24}q_{13} = 0. \end{equation}
Thus, equations (\ref{pi1}), (\ref{pi2}) and (\ref{weakPlucker}) define coordinates for an element $([\pi_1],[\pi_2])$ of the $7$-dimensional subvariety $W\subset(Gr(2,4)\times Gr(2,4))$.

We denote by $W_{reg}$ the set of smooth points of $W$, and by $Sing(W)$ the set of singular points of $W$.

\begin{prop}\label{singW}
Away from the diagonal, $W$ is a smooth hypersurface  of $Gr(2,4)\times Gr(2,4)$.
\end{prop}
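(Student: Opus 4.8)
\textbf{Proof proposal for Proposition \ref{singW}.}

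The plan is to work with the explicit defining equations of $W$ inside the smooth $8$-dimensional manifold $Gr(2,4)\times Gr(2,4)$ and verify that, away from the diagonal $\{[\pi_1]=[\pi_2]\}$, the defining function has nonvanishing differential, so that $W$ is cut out transversally and is therefore a smooth hypersurface. Concretely, fix an affine chart on each factor $Gr(2,4)$, say the standard Pl\"ucker chart where $p_{12}\neq 0$ (respectively $q_{12}\neq 0$), in which $[\pi_1]$ is parametrized by the four free coordinates $(p_{13},p_{14},p_{23},p_{24})$ with $p_{34}$ determined by the Pl\"ucker relation $p_{34}=(p_{13}p_{24}-p_{23}p_{14})/p_{12}$ (and similarly for $[\pi_2]$); the remaining charts are handled by the obvious symmetry under permuting basis vectors of $\mathbb R^4$. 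On such a chart, $W$ is the zero set of the single smooth function $F=\pi_1\wedge\pi_2$ read off from \eqref{weakPlucker}, and it suffices to show $dF\neq 0$ at every point of $W$ that does not lie on the diagonal.

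The key step is the computation of $dF$. Since $F(\pi_1,\pi_2)=\langle \pi_1,\ast\pi_2\rangle$ is bilinear in $(\pi_1,\pi_2)$ (where $\ast$ is, up to the chart normalization, the Hodge-type pairing $\Lambda^2\mathbb R^4\times\Lambda^2\mathbb R^4\to\Lambda^4\mathbb R^4\cong\mathbb R$), its differential at $(\pi_1,\pi_2)$ in the directions $(\dot\pi_1,\dot\pi_2)$ tangent to $Gr(2,4)\times Gr(2,4)$ is $dF=\langle\dot\pi_1,\ast\pi_2\rangle+\langle\pi_1,\ast\dot\pi_2\rangle$. Recall that the tangent space to $Gr(2,4)$ at $[\pi_1]$, with $\pi_1={\bf u}_1\wedge{\bf u}_2$, is $\{{\bf u}_1\wedge{\bf w}+{\bf w}'\wedge{\bf u}_2 : {\bf w},{\bf w}'\in\mathbb R^4\}/(\text{scaling})$, i.e. it is identified with $\mathrm{Hom}(\langle{\bf u}_1,{\bf u}_2\rangle,\mathbb R^4/\langle{\bf u}_1,{\bf u}_2\rangle)$. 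So $dF=0$ would force, among other things, $\langle {\bf u}_1\wedge{\bf w},\ast\pi_2\rangle=0$ for all ${\bf w}$, and similarly with ${\bf u}_2$, which says $\pi_1\wedge({\bf u}_i\wedge{\bf w})=0$ in $\Lambda^4$, i.e. $\ast\pi_2$ annihilates every ${\bf u}_i\wedge{\bf w}$, hence $\pi_2$ pairs to zero with the whole $2$-plane's worth of directions through the plane $\pi_1$. A short linear-algebra argument then shows that this, together with the symmetric condition coming from the $\dot\pi_2$ part, is only possible when the plane $[\pi_1]$ equals the plane $[\pi_2]$, i.e. on the diagonal. Thus on $W\setminus\text{diagonal}$ the differential $dF$ is nonzero, $W$ is locally $F^{-1}(0)$ with $F$ a submersion, and $W$ is a codimension-one smooth submanifold of $Gr(2,4)\times Gr(2,4)$, i.e. a smooth hypersurface of dimension $7$ there; this is exactly the count $\dim W=7$ already noted in the excerpt.

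The main obstacle I expect is the final linear-algebra reduction: correctly characterizing when the two linear functionals $({\bf w},{\bf w}')\mapsto \pi_1\wedge({\bf u}_1\wedge{\bf w}+{\bf w}'\wedge{\bf u}_2)\wedge(\text{stuff from }\pi_2)$ vanish identically, and showing this pins down $[\pi_1]=[\pi_2]$ rather than some larger degenerate locus (for instance, one must rule out that $\pi_1$ and $\pi_2$ share a line but are distinct planes — there $W$ is still smooth, and the computation must confirm $dF\neq 0$). Organizing this cleanly is easiest by choosing, for a given point of $W$, an adapted basis ${\bf e}_1,{\bf e}_2,{\bf e}_3,{\bf e}_4$ of $\mathbb R^4$ in which $\pi_1={\bf e}_1\wedge{\bf e}_2$ and $\pi_2$ is one of the normal forms of a $2$-plane weakly parallel to $\langle{\bf e}_1,{\bf e}_2\rangle$ (either sharing one line, say $\pi_2={\bf e}_1\wedge{\bf e}_3$, or equal, $\pi_2={\bf e}_1\wedge{\bf e}_2$); one then writes $F$ in Pl\"ucker coordinates about this point and reads off $\partial F/\partial q_{34}$ (or an analogous partial) as manifestly nonzero in the non-diagonal case. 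Carrying out this normal-form bookkeeping is routine and I will not grind through it here, but it is where the real content of the proposition sits.
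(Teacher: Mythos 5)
Your proposal is correct and takes essentially the same route as the paper: both arguments verify transversality of the Pl\"ucker and weak-parallelism equations in Pl\"ucker coordinates and identify the degeneracy locus as the diagonal $[\pi_1]=[\pi_2]$ --- the paper phrases this as a rank computation for the three defining equations inside $\mathbb P(\Lambda^2\mathbb R^4)\times\mathbb P(\Lambda^2\mathbb R^4)$, while you phrase it as nonvanishing of $dF$ for the single bilinear equation restricted to $Gr(2,4)\times Gr(2,4)$, which is the same computation repackaged. The linear algebra you defer does close as quickly as you hope: already the $\dot\pi_1$-directions give $\mathbf{u}_i\wedge\mathbf{w}\wedge\pi_2=0$ for all $\mathbf{w}$, i.e.\ $\mathbf{u}_i\wedge\pi_2=0$ in $\Lambda^3\mathbb R^4$ by nondegeneracy of the pairing $\Lambda^3\times\Lambda^1\to\Lambda^4$, forcing $\mathbf{u}_1,\mathbf{u}_2$ into the plane $\pi_2$ and hence $[\pi_1]=[\pi_2]$, so the one-shared-line case you flag is automatically ruled out.
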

\begin{proof}
First, note that each of the equations (\ref{pi1}) and (\ref{pi2}) define smooth submanifolds $Gr(2,4)\subset \mathbb P(\Lambda^2\mathbb R^4)$ and, similarly, equation (\ref{weakPlucker}) defines a smooth submanifold of $\mathbb P(\Lambda^2\mathbb R^4)\times\mathbb P(\Lambda^2\mathbb R^4)$. Thus, $W$ is singular only where these three  submanifolds of
$\mathbb P(\Lambda^2\mathbb R^4)\times\mathbb P(\Lambda^2\mathbb R^4)$ do not intersect transversaly. By straightforward computation, we see that the rank of the matrix of the derivatives of equations  (\ref{pi1}), (\ref{pi2}) and (\ref{weakPlucker}) is not maximal iff $\forall \ 1\leq i<j\leq 4  , \ p_{ij}/q_{ij}=k\in\mathbb R^*$. It follows that $Sing(W)=\{([\pi_1],[\pi_2])\in Gr(2,4)\times Gr(2,4) \ | \ [\pi_1]=[\pi_2]\}$.
\end{proof}

Now, as $Gr(2,4)\times Gr(2,4)$ fibers (trivially) over $Gr(2,4)$, say, via the first projection $Pr_1$, this induces a sub-bundle $W \to Gr(2,4)$, $([\pi_1],[\pi_2])\mapsto [\pi_1]$, which may not be trivial. Its typical fiber $W_{[\pi_1]}$  is a $3$-variety, which can locally be described as follows.

Chose a basis $\bf e_1,e_2,e_3,e_4$ for $\mathbb R^4$ such that $[\pi_1]=[\bf e_1\wedge e_2]$. Then, $[\pi_1]=[1,0,0,0,0,0]$, and $[\pi_2]=[q_{12},q_{13},q_{14},q_{23},q_{24},q_{34}]\in W_{[\pi_1]}$ iff $q_{12}q_{34} + q_{23}q_{14} - q_{13}q_{24}=0$ and $q_{34}=0$, that is,
$$[\pi_2]\in W_{[\pi_1]}\iff [\pi_2]=[q_{12},q_{13},q_{14},q_{23},q_{24},0] \ , \  q_{23}q_{14} - q_{13}q_{24}=0 \ ,$$ or equivalently,
\begin{equation}\label{fiber1}
[\pi_2]\in W_{[\pi_1]}\iff [\pi_2]=[1,\alpha,\beta,\gamma,\delta,0] \ , \  \beta\gamma - \alpha\delta=0 \ ,\end{equation}
in other words, close to $\alpha=\beta=\gamma=\delta=0$,
\begin{equation}\label{fiber2}
W_{[\pi_1]}=\{(\alpha,\beta,\gamma,\delta)\in\mathbb R^4 \ | \ \alpha\delta-\beta\gamma=0 \} \ . \end{equation}
Thus, we have a refinement of Proposition \ref{singW}, that is,
\begin{prop}\label{Wpi2}
In a neighborhood of $[\pi_2]=[\pi_1]$,  the $3$-variety $W_{[\pi_1]}$ is a cone. \end{prop}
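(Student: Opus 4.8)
The claim is that $W_{[\pi_1]}$, in a neighborhood of the point $[\pi_2]=[\pi_1]$, is a cone. Given the explicit local model already derived in equations (\ref{fiber1})--(\ref{fiber2}), the plan is essentially to read the cone structure off that model. First I would observe that the point $[\pi_2]=[\pi_1]$ corresponds, in the affine chart $q_{12}=1$ (where $[\pi_1]=[1,0,0,0,0,0]$), to the origin $(\alpha,\beta,\gamma,\delta)=(0,0,0,0)$ of $\mathbb{R}^4$, which is precisely where (\ref{fiber2}) places $W_{[\pi_1]}$. So it suffices to show that the affine variety $\{(\alpha,\beta,\gamma,\delta)\in\mathbb{R}^4 \mid \alpha\delta-\beta\gamma=0\}$ is a cone with vertex at the origin.

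The key step is then elementary: the defining polynomial $\alpha\delta-\beta\gamma$ is homogeneous of degree $2$, hence the variety $V=\{\alpha\delta-\beta\gamma=0\}$ is invariant under the scaling $(\alpha,\beta,\gamma,\delta)\mapsto(t\alpha,t\beta,t\gamma,t\delta)$ for every $t\in\mathbb{R}$; that is, if $x\in V$ then $tx\in V$. A subset of $\mathbb{R}^4$ closed under such scalings and containing the origin is by definition a cone with vertex at $0$. I would also note, for completeness, that this quadric is the affine cone over the smooth quadric surface $\{\alpha\delta-\beta\gamma=0\}\subset\mathbb{P}^3$ (a copy of $\mathbb{P}^1\times\mathbb{P}^1$, the Segre quadric), so $W_{[\pi_1]}$ is genuinely singular exactly at its vertex $[\pi_2]=[\pi_1]$ — the rank of the Hessian of $\alpha\delta-\beta\gamma$ being $4$ means the vertex is an isolated singular point of the cone, consistent with $Sing(W)$ being the diagonal as found in Proposition~\ref{singW}.

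The only genuine content is therefore the passage from the Pl\"ucker description of $W_{[\pi_1]}$ to the normal form (\ref{fiber2}), and that has already been carried out in the lines preceding the statement: choosing a basis adapted to $[\pi_1]$ forces $q_{34}=0$ from condition (\ref{weakPlucker}) specialized to $[\pi_1]=[1,0,0,0,0,0]$, and then the Pl\"ucker relation (\ref{pi2}) for $[\pi_2]$ collapses to $q_{23}q_{14}-q_{13}q_{24}=0$; dehomogenizing at $q_{12}=1$ gives (\ref{fiber1}). So the proof amounts to remarking that this local equation is weighted-homogeneous (indeed, standard homogeneous of degree $2$) in the coordinates $(\alpha,\beta,\gamma,\delta)$ centered at the vertex, which is exactly the statement that $W_{[\pi_1]}$ is a cone there.

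\textbf{Main obstacle.} There is no real obstacle: the substance is in the basis-adapted computation already displayed, and what remains is the trivial observation that a degree-two homogeneous equation defines a cone. The one point worth being careful about is that "cone" here should be understood with vertex at the specific point $[\pi_2]=[\pi_1]$, and that the coordinate chart and basis choice legitimately place that point at the origin of the local model; since the construction of (\ref{fiber1})--(\ref{fiber2}) did exactly this, nothing more is needed.
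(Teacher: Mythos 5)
Your proposal is correct and follows essentially the same route as the paper, which likewise treats the derivation of the local model $\{\alpha\delta-\beta\gamma=0\}$ in (\ref{fiber1})--(\ref{fiber2}) as the substance of the argument and reads off the cone structure from the homogeneity of the defining quadratic. Your added remark identifying the quadric as the affine cone over the Segre quadric, with an isolated singularity at the vertex consistent with Proposition~\ref{singW}, is a correct and harmless elaboration.
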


The following theorem, which follows from standard transversality arguments, describes how  affine equidistants $E_{\lambda}(M)$ are related to the intersection of
$W$ and $G(M)\times G(M)$.

\begin{thm}\label{thmgrass} Let $M\subset \mathbb R^4$ be a generic embedding and $(a,b)$ be a weakly parallel pair on $M$.

\

\noindent (i)  Let $(a,b)$ be a $1$-parallel pair, so that $(G(a),G(b))\in W_{reg}$. If $\Psi_{\lambda}|_{M\times M}: (\mathbb R^2\times\mathbb R^2, (a,b)) \to (\mathbb R^4,\lambda a+(1-\lambda)b)$ has a stable singularity (of type $A_k$, $k=1,2,3,4$), then $G(M)\times G(M)$ is transverse to $W_{reg}$ at $(G(a),G(b))$.

\

\noindent (ii)   Let $(a,b)$ be a $2$-parallel pair, so that  $(G(a),G(b))\in  Sing(W)$. If $\Psi_{\lambda}|_{M\times M}: (\mathbb R^2\times\mathbb R^2, (a,b)) \to (\mathbb R^4,\lambda a+(1-\lambda)b)$ has a stable singularity (of type $C^{\pm}_{2,2}$) then $(a,b)$ is a transversal  double point of the Gauss map.

\end{thm}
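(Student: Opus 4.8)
The statement is Theorem \ref{thmgrass}, which relates stability of $\Psi_\lambda|_{M\times M}$ at a weakly parallel pair to transversality of $G(M)\times G(M)$ with the stratified set $W$. I would prove both parts by a dimension-count plus genericity argument, exploiting the fact (Theorem \ref{eq:stable}) that for a generic embedding $\Psi_\lambda|_{M\times M}$ is locally stable, together with the observation that weak parallelism of a pair $(a,b)$ is by definition the condition $G(a)\wedge G(b)=0$, i.e. $(G(a),G(b))\in W$. So the locus of weakly parallel pairs in $M\times M$ is precisely $(G\times G)^{-1}(W)$, and the $\lambda$-equidistant is the $\Psi_\lambda$-image of its critical locus.

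First I would set up part (i). Here $(a,b)$ is $1$-parallel, so by Proposition \ref{singW}, $(G(a),G(b))\in W_{reg}$, a smooth hypersurface in the $8$-manifold $Gr(2,4)\times Gr(2,4)$. The map $G\times G\colon M\times M\to Gr(2,4)\times Gr(2,4)$ has a $4$-dimensional source. The key claim is: if $\Psi_\lambda|_{M\times M}$ has a stable ($A_k$, $k\le 4$) singularity at $(a,b)$, then $G\times G$ meets $W_{reg}$ transversally at $(G(a),G(b))$. I would argue by contraposition via standard transversality/genericity: the set of embeddings for which $G\times G$ fails to be transverse to $W_{reg}$ at some $1$-parallel pair is, by Thom transversality applied in the appropriate jet space, of positive codimension in a way that forces a \emph{higher-codimension} degeneracy of $\Psi_\lambda|_{M\times M}$ — i.e. $\Psi_\lambda$ would necessarily acquire a singularity of codimension $>4$ (an $A_{\ge 5}$ or a non-simple singularity), contradicting local stability. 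Concretely, I would show that the contact map $\mathcal K^\lambda$ of \eqref{eq:contactmap}, reduced to the one-variable germ $\theta_\lambda$ as in the proof of Theorem \ref{prop-A_k}, has the failure of transversality of $G\times G$ with $W_{reg}$ equivalent to the vanishing of an extra coefficient in the Taylor expansion of $\theta_\lambda$ beyond those controlled by the $A_k$ conditions \eqref{eq:curvatures}; since the $A_k$ singularity already uses $k\le 4$ conditions, an additional independent condition pushes $k\ge 5$, which is unstable. The bookkeeping that identifies ``non-transversality of $G\times G$ with $W_{reg}$'' with ``one extra Taylor condition on $\theta_\lambda$'' is the substantive computation; I expect this to be the main obstacle, since it requires relating the derivative of the Gauss map (which sees second-order data of $M$ via $\phi,\psi,\xi,\zeta$) to the differential of the Plücker equation \eqref{weakPlucker} along $G\times G$.

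For part (ii), $(a,b)$ is $2$-parallel, so $(G(a),G(b))\in Sing(W)$, which by Proposition \ref{singW} is the diagonal $\{[\pi_1]=[\pi_2]\}$, a $4$-dimensional submanifold of $Gr(2,4)\times Gr(2,4)$. The assertion ``$(a,b)$ is a transversal double point of the Gauss map'' means $G$ is an immersion at both $a$ and $b$ and the two tangent $2$-planes $dG_a(T_aM)$, $dG_b(T_bM)$ inside $T_{G(a)}Gr(2,4)$ meet transversally (their sum is the whole $4$-dimensional tangent space). I would argue that if this fails — either $dG_a$ or $dG_b$ drops rank, or the images fail to be transverse — then $(G\times G)|_{M\times M}$ is tangent to $Sing(W)$ to higher-than-expected order at $(a,b)$, and tracing this through the contact map $\mathcal K^\lambda$ of \eqref{eq:contactmap2} and its graph $\mathcal C^\lambda$ shows that the $2$-jet of $\mathcal K^\lambda$ (the pair of quadratic forms $(\phi,\psi)$ minus the scaled $(\xi,\zeta)$) degenerates further than a generic $C^\pm_{2,2}$ singularity permits — e.g. both quadratic forms would share a common kernel line beyond what $\Delta_{\mathcal C^\lambda}\ne 0$ (Remark \ref{noparabolic}) allows, or the $2$-jet would become a $C_{2,3}$ or worse. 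Again this contradicts local stability of $\Psi_\lambda$. So the logical skeleton is the same in both parts: ``failure of the Grassmannian transversality $\Rightarrow$ extra degeneracy of the contact map $\Rightarrow$ unstable singularity of $\Psi_\lambda$ $\Rightarrow$ contradiction with the generic embedding hypothesis.''

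Finally, I would note that the cleanest packaging is exactly the one the paper hints at (``follows from standard transversality arguments''): one sets up the jet-transversality theorem so that the ``generic embedding'' set $\mathcal S$ of Definition \ref{genericembedding} can be taken to simultaneously guarantee local stability of $\Psi_\lambda|_{M\times M}$ \emph{and} transversality of $G\times G$ to the natural Whitney stratification of $W$ (strata $W_{reg}$ and $Sing(W)$), and then part (i) and part (ii) are immediate once one checks that a stable singularity of the prescribed type at a $k$-parallel pair forces the source point to lie in the corresponding stratum with the transverse incidence. The only genuinely technical input is the one flagged above — the explicit identification, in the coordinates of \eqref{eq:contactmap} and \eqref{eq:contactmap2}, of the differential-geometric transversality conditions on $G\times G$ with jet-level conditions on the contact map — and I would carry that out by differentiating the Plücker relations \eqref{pi1}, \eqref{pi2}, \eqref{weakPlucker} restricted along $G\times G$ and comparing with the Hessian data of $(\phi,\psi,\xi,\zeta)$ already used in the proofs of Theorems \ref{prop-A_k}, \ref{c+}, and \ref{c-}.
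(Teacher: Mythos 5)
The paper itself records no proof of Theorem \ref{thmgrass} beyond the remark that it ``follows from standard transversality arguments,'' so what must be judged is whether your sketch supplies a correct such argument. Your skeleton (failure of the Grassmannian transversality forces an extra degeneracy of $\Psi_{\lambda}|_{M\times M}$, contradicting stability) can be made to work, but the two concrete identifications you defer as ``the substantive computation'' are precisely where your plan goes wrong as stated. For part (i), the efficient observation is that the Jacobian determinant $J$ of $\Psi_{\lambda}|_{M\times M}$ at a $1$-parallel pair is, up to a nonvanishing factor, the pullback by $G\times G$ of the local defining equation \eqref{weakPlucker} of $W$ (both are the determinant of a $4\times 4$ matrix whose columns span $T_aM$ and $T_bM$). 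Hence transversality of $G\times G$ to the hypersurface $W_{reg}$ at $(G(a),G(b))$ is \emph{equivalent} to $dJ\neq 0$, i.e.\ to smoothness of the critical locus $\Sigma(\Psi_{\lambda}|_{M\times M})=(G\times G)^{-1}(W)$; and every stable corank-one (Morin) germ $A_k$, $k\le 4$, has smooth critical locus. So (i) is a direct implication, needing neither contraposition nor any genericity-of-embeddings argument. Your proposed mechanism --- that non-transversality is ``one extra Taylor condition on $\theta_{\lambda}$'' which ``pushes $k\ge 5$'' --- is incorrect: in the coordinates of \eqref{eq:contactmap} one computes $dJ|_0\propto(-\phi_{yy},-\phi_{yz},\zeta_{uu},\zeta_{uv})$, so non-transversality imposes \emph{four} conditions, only one combination of which ($\phi_{yy}+\tfrac{\lambda}{1-\lambda}\zeta_{uu}=0$) is visible in the reduced one-variable germ $\theta_{\lambda}$; the others concern the unfolding, and the resulting degeneration leaves the $A$-series altogether (singular critical set) rather than producing an $A_{\ge 5}$.

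For part (ii) the same misidentification recurs. The condition ``$(a,b)$ is a transversal double point of $G$'' reads $dG_a(T_aM)\oplus dG_b(T_bM)=T_{G(a)}Gr(2,4)$, and since $dG_p$ is the contraction map $v\mapsto {\rm II}_p(v,\cdot)\in{\rm Hom}(T_pM,N_pM)$, it says that ${\rm II}_a$ and ${\rm II}_b$ have complementary (in particular $2$-dimensional) images. This is \emph{not} detected by the $2$-jet of the contact map \eqref{eq:contactmap2}, which only sees the combination $c_1{\rm II}_a+c_2{\rm II}_b$: that pencil can define a perfectly nondegenerate $C^{\pm}_{2,2}$ contact type while, say, $dG_a$ drops rank. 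So your proposed failure mode (``the $2$-jet of $\mathcal K^{\lambda}$ degenerates to $C_{2,3}$ or worse'') misses the actual obstruction. The correct link is through \emph{versality}: writing $\Psi_{\lambda}|_{M\times M}$ in coordinates $(s,t,u,v)$ with $(s,t)=\lambda(y,z)+(1-\lambda)(u,v)$ exhibits it as a $2$-parameter unfolding of the contact map, whose degree-one unfolding directions span exactly ${\rm Im}(dG_a)$ while the tangent space to the $\mathcal K$-orbit contributes ${\rm Im}(c_1{\rm II}_a+c_2{\rm II}_b)$; versality in degree one is then equivalent to ${\rm Im}(dG_a)+{\rm Im}(dG_b)={\rm Hom}(T,N)$, which is the transversal double point condition. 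Since a stable $C^{\pm}_{2,2}$ germ of $\Psi_{\lambda}$ requires this versality, (ii) follows. Without these two corrected identifications your outline does not close; with them, both parts become short direct arguments and the contraposition-through-genericity framing is unnecessary (the theorem is a pointwise implication about the given pair, not a statement that holds only for most embeddings).
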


\subsection{Geometric description of the set of weakly parallel points}

We emphasize that, from Theorem \ref{thmgrass}, for generic embeddings of smooth closed surfaces in $\mathbb R^4$ there are only double points of Gauss map. There are no triple, quadruple... points of the Gauss map, generically.

Therefore we obtain the following corollary of Theorem \ref{thmgrass}:

\begin{cor}\label{corgrass} For generic embeddings of smooth closed surfaces in $\mathbb R^4$, strongly parallel (nonidentical) points come only in pairs and there are only finite numbers of such pairs.
\end{cor}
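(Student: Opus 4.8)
The plan is to deduce Corollary~\ref{corgrass} directly from Theorem~\ref{thmgrass}(ii) together with a dimension count. For a generic embedding $M\subset\mathbb R^4$, Theorem~\ref{thmgrass}(ii) tells us that any $2$-parallel pair $(a,b)$ with $a\neq b$ is a transversal double point of the Gauss map $G\colon M\to Gr(2,4)$; in particular no point of $G(M)$ can be a value taken three or more times, since a triple point would force a non-transversal (higher-multiplicity) self-intersection, contradicting the transversality already guaranteed by genericity. So the first step is simply to record that strongly parallel non-identical points occur only as pairs $\{a,b\}$, never in triples or larger clusters.

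The second step is to show that the set of such pairs is finite. Since $M$ is a closed (hence compact) surface, it suffices to show that the set of strongly parallel pairs is discrete in $M\times M\setminus\Delta$. For this I would look at the double Gauss map $G\times G\colon M\times M\to Gr(2,4)\times Gr(2,4)$ and the locus $\{(a,b): G(a)=G(b)\}$, which is the preimage of the diagonal $\Delta_{Gr}\subset Gr(2,4)\times Gr(2,4)$ under $G\times G$. Away from $\Delta$, genericity makes $G$ an immersion with only transversal double points; equivalently, $(G\times G)|_{M\times M\setminus\Delta}$ meets $\Delta_{Gr}$ transversally. Since $\dim(M\times M)=4$, $\dim Gr(2,4)=4$, and $\operatorname{codim}\Delta_{Gr}=4$, transversality forces the intersection to be a $0$-dimensional submanifold of the compact space $M\times M\setminus\Delta$ — but one must be slightly careful, because $M\times M\setminus\Delta$ is not compact. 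Here I would use that strongly parallel pairs cannot accumulate on the diagonal: if $(a_n,b_n)\to(a,a)$ with $G(a_n)=G(b_n)$, then by the immersion property of $G$ near $a$ we would get $a_n=b_n$ for large $n$, a contradiction. Combining discreteness with the fact that the closure of the set of strongly parallel pairs in $M\times M$ adds at most diagonal points, which are excluded, we conclude the set is a closed discrete subset of the compact space $M\times M$, hence finite.

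The main obstacle is the accumulation issue at the diagonal: one has to rule out an infinite sequence of strongly parallel pairs degenerating to a diagonal point, and this is exactly where the \emph{immersion} property of the Gauss map (part of genericity, as noted in the paragraph preceding Proposition~\ref{singW}) is essential rather than merely the transversality of double points. Once that is in hand, the statement is a routine compactness-plus-transversality argument, and the pairing statement is immediate from Theorem~\ref{thmgrass}(ii). I would also remark that since equidistant singularities $C^{\pm}_{2,2}$ have codimension $4$ in the target, they occur at isolated points of $M\times M\setminus\Delta$, which gives an alternative route to finiteness consistent with the count above.
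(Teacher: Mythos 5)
Your overall route is the same as the paper's: the corollary is read off from the genericity of the Gauss map $G\colon M\to Gr(2,4)$ (immersion with transversal double points) together with Theorem~\ref{thmgrass}(ii), and your finiteness argument --- transversality of $G\times G$ to the diagonal $\Delta_{Gr}$ giving a $0$-dimensional double-point set, plus the observation that local injectivity of the immersion $G$ prevents accumulation of strongly parallel pairs onto the diagonal of $M\times M$ --- is correct and in fact more careful than the paper, which only records the one-line remark that generically the Gauss map has no triple or higher points.

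There is, however, one genuinely wrong step: your justification for the absence of triple points. You claim a triple point ``would force a non-transversal (higher-multiplicity) self-intersection, contradicting the transversality already guaranteed by genericity.'' This inference is false: three $2$-dimensional sheets in a $4$-manifold can pass through a common point with all three pairwise intersections transverse (take three generic $2$-planes through the origin in $\mathbb R^4$), so pairwise transversality of double points does not exclude triple points. The correct reason is a separate genericity statement, obtained from multijet transversality: the expected dimension of the triple-point locus of a map $M^2\to N^4$ is $3\cdot 2-2\cdot 4=-2<0$, so for a residual set of embeddings the locus is empty. This is exactly the standard fact the paper invokes when it asserts that ``there are no triple, quadruple\dots points of the Gauss map, generically,'' and it must be cited (or proved) as such rather than deduced from the transversality of the double points.
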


An interesting question, whose answer is unknown to us, is whether there exists any embedded compact surface $M\subset \mathbb R^4$ without nonidentical $2$-parallel points, in other words, such that the Gauss map $G: M\to Gr(2,4)$ is injective.


\begin{notation} For  $p\in M$, let ${\mathcal W}_p\subset M$ denote the set of weakly parallel points to $p$ and let ${\mathcal W}_p^{q}$ denote the germ of  ${\mathcal W}_p$ at $q\in M$. \end{notation}

\begin{rem} It is easy to see that $G(\mathcal W_p)\subset W$ where the latter is described in Propositions \ref{singW} and \ref{Wpi2}. \end{rem}

Then, the following theorem describes ${\mathcal W}_p^{q}$ in all possible situations.

\begin{thm}\label{weaklyset}  For a generic embedding of $M$ into $\mathbb R^4$, cf. Definition \ref{genericembedding} and Theorem \ref{eq:stable},  the following hold.

\

\noindent (1) If $q$ is $1$-parallel to $p$, then  ${\mathcal W}_p^{q}$ is a germ of smooth curve.

\

\noindent (2) If $q$ is $2$-parallel to $p$, then:

\

\noindent (i) If $q$ is an elliptic point of $M$, then ${\mathcal W}_p^{q}=\{q\}$.

\

\noindent (ii) If $q$ is a parabolic point of $M$, then ${\mathcal W}_p^{q}$ is a  singular curve  with a cusp singularity at $q$ which is tangent  to the asymptotic direction at $q$ (this is generic for $q=p$, as a generic embedding has a parabolic point, or in a $1$-parameter family of embeddings for $q\neq p$, cf. Remark \ref{noparabolic}).

\

\noindent (iii) If $q$ is a hyperbolic point of $M$, then ${\mathcal W}_p^{q}$ is a singular curve with a transversal double point at $q$ so that each branch of ${\mathcal W}_p^{q}$ is a smooth curve
tangent to an asymptotic direction at $q$.

\end{thm}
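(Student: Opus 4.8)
The strategy is to study the germ $\mathcal W_p^q$ through the Gauss map. By the Remark preceding the statement, $G(\mathcal W_p)\subset W$, and since we are at a point where $q$ is weakly parallel to $p$, we have $(G(q),G(p))\in W$, lying either in $W_{reg}$ (case $1$-parallel) or in $Sing(W)$ (case $2$-parallel). The key observation is that $\mathcal W_p^q = G^{-1}\big(W_{G(p)}\big)$ locally, where $W_{G(p)} = Pr_2\big(W\cap (Gr(2,4)\times\{G(p)\})\big)$ is (up to switching the roles of the two factors) the typical fiber of the sub-bundle $W\to Gr(2,4)$ analyzed in Propositions \ref{singW}, \ref{Wpi2}. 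Thus the problem splits into: (a) understanding the local model of the $3$-variety $W_{G(p)}$ near $G(q)$, and (b) understanding how the immersed surface $G(M)$ sits relative to this $3$-variety near $G(q)$ --- precisely the transversality data recorded in Theorem \ref{thmgrass}.

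\textbf{Case (1): $q$ is $1$-parallel to $p$.} Here $(G(q),G(p))\in W_{reg}$, so near $G(q)$ the fiber $W_{G(p)}$ is a smooth hypersurface in $Gr(2,4)$ (this is exactly the statement that $W\to Gr(2,4)$ is a submersion at regular points, read on the fiber). By Theorem \ref{thmgrass}(i), genericity gives that $G(M)$ is transverse, inside $Gr(2,4)\times Gr(2,4)$, to $W_{reg}$ at $(G(q),G(p))$; restricting to the fiber over $G(p)$, this forces $G(M)$ (a surface) to meet the smooth hypersurface $W_{G(p)}$ (codimension $1$ in the $4$-fold $Gr(2,4)$) transversally, so the intersection is a smooth curve through $G(q)$. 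Since $G$ is an immersion (genericity) and injective near $q$ away from the finitely many $2$-parallel pairs, pulling back gives that $\mathcal W_p^q$ is a germ of smooth curve. This case is essentially a bookkeeping exercise once Theorem \ref{thmgrass}(i) is invoked.

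\textbf{Case (2): $q$ is $2$-parallel to $p$.} Now $(G(q),G(p))\in Sing(W)$, meaning $G(q)=G(p)$, and by Proposition \ref{Wpi2} the fiber $W_{G(p)}$ is, near this point, a cone: in the local coordinates $(\alpha,\beta,\gamma,\delta)$ of \eqref{fiber2}, $W_{G(p)}=\{\alpha\delta-\beta\gamma=0\}$, a quadric cone of rank $4$ with an isolated singular point at the origin $=G(q)$. The plan is to pull this cone back by the immersion-germ $G:(M,q)\to (Gr(2,4),G(q))$, i.e.\ to compute the $2$-jet of $G$ at $q$ and substitute into the quadratic form $\alpha\delta-\beta\gamma$. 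One identifies the differential $dG_q$ with a $2$-plane through the origin in $(\alpha,\beta,\gamma,\delta)$-space, and the restriction of the quadratic form $\alpha\delta-\beta\gamma$ to this $2$-plane is a binary quadratic form $Q$ in the surface parameters; its discriminant is, by a direct identification with the coefficients of the second fundamental form, (a nonzero multiple of) the invariant $\Delta_M(q)$ of \eqref{eq:Delta4}. Hence: if $q$ is elliptic ($\Delta_M(q)>0$), $Q$ is definite and $\mathcal W_p^q=\{q\}$; if $q$ is hyperbolic ($\Delta_M(q)<0$), $Q$ has two distinct real linear factors, so $\mathcal W_p^q$ is --- after checking the higher-order terms do not obstruct, which holds because $G$ is an immersion with transversal double point at $q$ by Theorem \ref{thmgrass}(ii) and the cone is nondegenerate --- a transversal double point of two smooth branches, each tangent to a root of $Q$, which one recognizes as an asymptotic direction at $q$ by comparing with Definition of binormal/asymptotic directions; if $q$ is parabolic ($\Delta_M(q)=0$), $Q$ is a perfect square, the two branches of the cone-section become tangent, and one must go to the $3$-jet of $G$ to see a cusp --- this requires the genericity statement of Remark \ref{noparabolic} (that $C_{2,3}$ points do not occur stably) to guarantee the next-order term is nonzero and produces an ordinary cusp, tangent to the (unique) asymptotic direction.

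\textbf{Main obstacle.} The routine part is Case (1) and the linear-algebra computation identifying the discriminant of $Q$ with $\Delta_M$. The delicate part is the parabolic subcase (2)(ii): showing that the section of the cone $W_{G(p)}$ by the (now tangent) surface $G(M)$ is exactly a cusp --- and not a worse singularity or a smooth curve --- demands controlling the $3$-jet of the Gauss map at $q$ and verifying that genericity (via Remark \ref{noparabolic}, i.e.\ the absence of stable $C_{2,3}$ singularities of $\Psi_\lambda$, equivalently the non-vanishing of a third-order invariant) rules out degeneration. Identifying the cusp's tangent line with the asymptotic direction, and handling the case $q=p$ (where the relevant $2$-plane $dG_q(T_qM)$ and the fiber direction interact, and one uses that a generic surface has parabolic points) versus $q\neq p$ (a $1$-parameter phenomenon), also needs care. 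I would treat these by reducing, via the contact-map description of Section \ref{revsing} and Theorems \ref{prop-A_k}--\ref{c-}, to the normal forms already tabulated, so that the cone-section is read off directly from the $C_{2,2}^\pm$ and $C_{2,3}$ normal forms rather than recomputed from scratch.
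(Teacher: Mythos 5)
Your plan is essentially sound, and once unwound it collapses onto the paper's own computation, though the packaging is different. The paper works directly in adapted coordinates: it writes the Jacobian of $(x,y,u,v)\mapsto\lambda F(x,y)+(1-\lambda)G(u,v)$ at $(0,0,u,v)$, which is $\tfrac{\partial g_3}{\partial u}(u,v)$ in the $1$-parallel case and $Jac(g_3,g_4)(u,v)$ in the $2$-parallel case, observes that the Hessian of the latter at the origin equals $4\Delta_M(q)$, and also reinterprets $\mathcal W_p$ as the singular set of the projection $\rho_p:M\to N_pM$. Your route through the Grassmannian produces literally the same function: in the affine chart $p_{12}=1$ centred at $[\pi_1]=G(p)$, the fiber $W_{[\pi_1]}$ is $\{p_{34}=0\}$ with $p_{34}=\alpha\delta-\beta\gamma$, and $p_{34}\circ G=g_{3,u}g_{4,v}-g_{3,v}g_{4,u}=Jac(g_3,g_4)$; your binary form $Q$ is its $2$-jet, whose discriminant is $-4\Delta_M(q)$ and whose null directions are exactly the roots of the asymptotic binary differential equation $(af-be)u^2+(ag-ce)uv+(bg-cf)v^2=0$. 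So cases (2)(i) and (2)(iii) come out as you describe, and for (2)(ii) the paper is no more detailed than you are: the $3$-jet analysis guaranteeing an ordinary cusp is deferred to Proposition \ref{pwf} (the Whitney umbrella normal form), consistent with your appeal to Remark \ref{noparabolic}.

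The one genuine slip is in Case (1). Transversality of $G(M)\times G(M)$ to $W_{reg}$ inside $Gr(2,4)\times Gr(2,4)$ (Theorem \ref{thmgrass}(i)) does \emph{not} imply that the slice $G(M)$ meets the fiber $W_{G(p)}$ transversally in $Gr(2,4)$: the tangent space of the product can fail to lie in $T W_{reg}$ thanks entirely to the second factor, while $T_{G(q)}G(M)\subset T_{G(q)}W_{G(p)}$ still holds, in which case your argument gives no smooth curve. What you actually need is the fiberwise condition, which in coordinates is exactly the paper's hypothesis $d\bigl(\tfrac{\partial g_3}{\partial u}\bigr)\big|_{(0,0)}\neq 0$, i.e.\ $(g_{3,uu}(0),g_{3,uv}(0))\neq(0,0)$; this is an independent (open, dense) genericity condition on the embedding that must be imposed directly rather than derived from Theorem \ref{thmgrass}(i). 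With that correction, and with the parabolic $3$-jet computation actually carried out, your argument is complete.
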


\begin{proof} 
If the points $p, q \in M$ are $1$-parallel then the germs of $M$ at $p=(p_1,p_2,p_3,p_4)$ and at $q=(q_1,q_2,q_3,q_4)$ can be parametrized in the following way
$F(x,y)=(p_1+x,p_2+y,p_3+f_3(x,y),p_4+f_4(x,y))$ and $G(u,v)=(q_1+u,q_2+g_2(u,v),q_3+g_3(u,v),q_4+v)$ respectively, where $f_3, f_4, g_2, g_4$ are smooth function-germs vanishing at $(0,0)$ such that $df_3|_{(0,0)}=df_4|_{(0,0)}=dg_2|_{(0,0)}=dg_3|_{(0,0)}=0$. The point $G(u,v)$ is weakly parallel to $p$ if the Jacobian of the map
\begin{equation}\label{FG}
(x,y,u,v)\mapsto \lambda F(x,y)+(1-\lambda) G(u,v)
\end{equation}
vanishes at the point $(0,0,u,v)$. The Jacobian of the map (\ref{FG}) at $(0,0,u,v)$ has the  form $\frac{\partial g_3}{\partial u}(u,v)$. Generically $d(\frac{\partial g_3}{\partial u})|_{(0,0)}\ne 0,$ therefore ${\mathcal W}_p^q$ is a germ at $q$ of a smooth curve.

If  the points $p, q \in M$ are $2$-parallel then the germs of $M$ at $p=(p_1,p_2,p_3,p_4)$ and at $q=(q_1,q_2,q_3,q_4)$ can be parametrized in the following way
$F(x,y)=(p_1+x,p_2+y,p_3+f_3(x,y),p_4+f_4(x,y))$ and $G(u,v)=(q_1+u,q_2+v,q_3+g_3(u,v),q_4+g_4(u,v))$ respectively, where $f_3, f_4, g_3, g_4$ are smooth function-germs vanishing at $(0,0)$ such that $df_3|_{(0,0)}=df_4|_{(0,0)}=dg_3|_{(0,0)}=dg_4|_{(0,0)}=0$.

The point $G(u,v)$ is weakly parallel to $p$ if the Jacobian of the map (\ref{FG}) vanishes at $(0,0,u,v)$. It is easy to see that the Jacobian of the map (\ref{FG}) at $(0,0,u,v)$ is $Jac(g_3,g_4)(u,v)$, i.e. the Jacobian of the map $(g_3,g_4)$ at $(u,v)$. It is also easy to see $d(Jac(g_3,g_4))|_{(0,0)}$ vanishes.

The Hessian of the function $(u,v)\mapsto Jac(g_3,g_4)(u,v)$ at $(0,0)$ is equal to $4\Delta_M(q)$. Therefore if  $q$ is an elliptic point, then ${\mathcal W}_p^q=\{q\}$, if $q$ is a parabolic point, then ${\mathcal W}_p^q$ is a singular curve with a cusp singularity at $q$ which is tangent  to the asymptotic direction at $q$, and finally if $q$ is a hyperbolic point, then ${\mathcal W}_p^q$ consists of the crossing of two smooth curves at $q$, each one tangent to an asymptotic direction at $q$.

We can also interpret the above calculations in terms of singularities of projections into planes. In fact,
 let $\rho_{p}: M \to N_pM$ be the projection of $M$ into the $2$-plane $N_pM= \mathbb R^2,$ which is fixed.

Then  the singular set of the projection,
 $$\Sigma \rho_p=\{q \in M|\,  \, \text{there exists some}\, {\bf v}\in T_qM, {\bf v} \in \, \text{ker} \rho_p\}$$ coincides with the set
${\mathcal W}_p.$ Given $q \in {\mathcal W}_p,$ we use  the above local parametrizations  to study ${\mathcal W}_{p}^q.$


If points $p, q \in M$ are $1$-parallel then the germs of $ M $ at $p=(p_1,p_2,p_3,p_4)$ and at $q=(q_1,q_2,q_3,q_4)$ can be parametrized  respectively by
$F(x,y)=(p_1+x,p_2+y,p_3+f_3(x
,y),p_4+f_4(x,y))$ and $G(u,v)=(q_1+u,q_2+g_2(u,v),q_3+g_3(u,v),q_4+v).$ The normal plane of $M$ at $p$ is the plane
$[(0,0,1,0), (0,0,0,1)].$ Hence,  the germ at $q$ of the projection $\rho_p: M \to  N_pM$ is given by
$$\rho_p\circ G(u,v)=(q_3+g_3(u,v), q_4+ v).$$
Thus, as above, ${\mathcal W}_p^q$ is smooth in a neighbourhood of $q$  if and only if $({g_3}_{uu}(0), {g_3}_{uv}(0)) \neq (0,0)$  and this condition is satisfied for generic embeddings of $M$.
On the other hand, if points $p, q \in M$ are $2$-parallel, the germ at $q$ of the projection is given by
$$\rho_p\circ G(u,v)=(q_3+g_3(u,v), q_4+ g_4(u,v)),$$  and we proceed as above.
\end{proof}

Because the case $(2-ii)$ above for $q\neq p$ is only generic in a $1$-parameter family of embeddings, according to Definition \ref{genericembedding} and Theorem \ref{eq:stable},  we now analyze  its bifurcation set.

\begin{prop}\label{pwf} Let $I\ni t$ be an open interval containing $0$, with $M_t$ a generic $1$-parameter family of smooth-surface embeddings  in $\mathbb R^4$ such that  the points $p_t$ and $q_t$ in $M_t$ are strongly parallel $\forall t\in I$ and $q_0$ is a parabolic point of $M_0\subset\mathbb R^4$.
Let $\mathcal W_p^q(t)\subset M_t$ denote the germ of weakly parallel points to $p_t$ at $q_t$. Then, $\mathcal W_p^q(t)$ is described by the Whitney umbrella
\begin{equation}\label{wu} 2u^2-3v^3-2tv^2 = 0 \ , \end{equation}
such that a curve $\mathcal C_{t=t_0}$ on this surface in $\mathbb R^3$ has a smooth branch and an isolated point if $t_0<0$, or is a cusp if $t_0=0$, or  is a looped curve with a  transversal self-crossing if $t_0>0$. These three cases correspond to the point $q_{t_0}=(u,v)=(0,0)$ being an isolated point for $t_0<0$ (elliptic case),
 a cuspidal point for $t_0=0$ (parabolic case),  or a
transversal double point for $t_0>0$ (hyperbolic case).
\end{prop}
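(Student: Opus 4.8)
The plan is to build on the local computation already carried out in the proof of Theorem \ref{weaklyset}, case (2). There, for a strongly parallel pair $p,q$, the set $\mathcal W_p^q$ is cut out by the vanishing of $J(u,v):=\mathrm{Jac}(g_3,g_4)(u,v)$, a function-germ at $(0,0)$ with $J(0,0)=0$ and $dJ|_{(0,0)}=0$, whose Hessian at the origin equals $4\Delta_M(q)$. When $q$ is parabolic this Hessian is degenerate (rank $1$), and by Remark \ref{noparabolic} this is non-generic for a single surface but codimension-one, hence generic in a $1$-parameter family $M_t$. First I would introduce the parameter: for the family $M_t$ with strongly parallel pair $p_t,q_t$, one gets a family $J(u,v,t)$ of function-germs with $J(\cdot,\cdot,0)$ having an $A_2$ (cusp) singularity at the origin — that is, after a coordinate change the $2$-jet is a perfect square, say $u^2$, and the cubic terms supply a nonzero $v^3$. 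A generic $1$-parameter unfolding of an $A_2$ function singularity is the versal unfolding $u^2 + v^3 + tv^2$ (equivalently $u^2+v^3+tv$ after completing the cube; here the form with $tv^2$ is the one adapted to keeping $q_t$ at the origin). Rescaling to match the stated normalization gives the equation $2u^2-3v^3-2tv^2=0$.

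The second step is to recognize this family-of-curves, viewed as a surface in $\mathbb R^3$ with coordinates $(u,v,t)$, as a Whitney umbrella. Solving $2u^2=3v^3+2tv^2=v^2(3v+2t)$ gives $u=\pm v\sqrt{(3v+2t)/2}$, which exhibits the origin as the pinch point of a cross-cap: the surface is the image of $(v,s)\mapsto (vs,\,v,\,(2s^2-3v)/2)$ or a similar parametrization, the standard normal form $X^2=Y^2 Z$ up to a coordinate change. I would verify this by exhibiting the explicit diffeomorphism bringing $2u^2-3v^3-2tv^2$ to the model $u^2 - v^2 w$ (with $w$ a linear-plus-higher function of $v,t$ that is a submersion at the origin, so it can serve as a new coordinate). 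Then the slices $t=t_0$ of the Whitney umbrella are exactly the classical family: an isolated point together with a smooth arc for $t_0$ on one side of the pinch, a cusp at $t_0=0$, and a loop with a transversal self-crossing (a nodal cubic) for $t_0$ on the other side. Matching the sign of $3v+2t$ near $v=0$ to the sign of $t_0$ pins down which side is which, and this matches the elliptic/parabolic/hyperbolic trichotomy because the sign of $\Delta_{M_{t_0}}(q_{t_0})$ changes sign through $t_0=0$ with the Hessian of $J(\cdot,\cdot,t_0)$ being (up to positive factor) that of the relevant nodal/elliptic quadratic form.

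The main obstacle, and the step deserving the most care, is justifying that the family $J(u,v,t)$ is a \emph{generic} (hence versal) one-parameter unfolding of the $A_2$ singularity, rather than just writing down the versal model by fiat. Concretely, one must check that the derivative $\partial J/\partial t|_{(0,0,0)}$ represents a nonzero class in the quotient $\mathcal E_2 / \langle \partial J/\partial u, \partial J/\partial v, u^2+\text{(2-jet)}\rangle$ — i.e. that the deformation direction supplied by moving the surface in the family is not tangent to the $\mathcal K$- (or right-) equivalence orbit of the cusp. This is where the word "generic $1$-parameter family" in the hypothesis does its work: transversality to the (codimension-one in the appropriate jet space) stratum of $A_2$-degenerate Jacobians is an open dense condition on families, guaranteed by the standard Thom transversality theorem exactly as in the proof of Theorem \ref{eq:stable} and in the references \cite{DRR, MRR}. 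Once versality is in hand, the $\mathcal A$-equivalence of the total family to the Whitney umbrella and the description of its slices are standard facts about unfoldings of $A_2$, and the identification of the three regimes with the elliptic/parabolic/hyperbolic cases follows from the Hessian computation already recorded in Theorem \ref{weaklyset}.
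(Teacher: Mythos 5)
Your overall route is the same as the paper's: compute the Jacobian of $(g_t^3,g_t^4)$, obtain $J(u,v,t)=2u^2-3v^3-2tv^2+(\text{h.o.t.})$, reduce to the model $2u^2-3v^3-2tv^2$, recognize the Whitney umbrella, and read off the three slice types. The paper carries out the reduction by an explicit near-identity change of coordinates $(u,v,t)\mapsto(U,V,t)$ (a short lemma absorbing the higher-order terms), whereas you propose to get it from versality of unfoldings. It is in that step that your argument, as written, breaks.

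The problem is that $tv^2$ is \emph{not} a versal deformation direction of the cusp $u^2+v^3$ in the standard (unconstrained) sense, and your proposed criterion would in fact return zero: for $J_0=2u^2-3v^3$ the Jacobian ideal is $\langle u,v^2\rangle$, and $\partial J/\partial t|_{(0,0,0)}=-2v^2$ lies in it, so its class in $\mathcal E_2/\langle \partial J/\partial u,\partial J/\partial v,\dots\rangle$ vanishes. Relatedly, the parenthetical claim that $u^2+v^3+tv^2$ is ``equivalently $u^2+v^3+tv$ after completing the cube'' is false: completing the cube produces a linear term of order $t^2$, and the family $u^2+v^3+tv$ has \emph{smooth} zero sets for all $t\neq 0$, not the isolated-point/cusp/node trichotomy the proposition asserts. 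The missing idea is that the hypothesis ``$p_t$ and $q_t$ are strongly parallel for all $t$'' forces $J(0,0,t)=0$ and $dJ|_{(0,0,t)}=0$ for every $t$, so the relevant deformation category is that of function-germs in $\mathfrak m^2$ under diffeomorphisms fixing the origin; there the normal space to the orbit of the cusp is $\mathfrak m^2/\mathfrak m\cdot\langle u,v^2\rangle$, which is spanned precisely by $v^2$, and genericity of the family means exactly that the $v^2$-coefficient of $\partial J/\partial t$ is nonzero. With that correction (or with the paper's direct coordinate change, which sidesteps the deformation-theoretic language entirely), the rest of your argument --- the cross-cap parametrization, the sign analysis of $3v+2t$, and the matching with the sign of $\Delta_{M_{t}}(q_{t})$ via the Hessian of $J$ --- is correct and agrees with the paper.
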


\begin{proof} Following the same notation of the proof of Theorem \ref{weaklyset}, with $t$ denoting the parameter of the family of embeddings and assuming $q_t$ is parabolic for $t=0$, the germ of $M_t$ at $q_t$ can be put after translation to the form\footnote{In general, the tangent plane to $M_t$ at $q_t$ will change with $t$, but we can adopt an orthonormal moving frame such that $T_{q_t}M_t=span<(1,0,0,0), (0,1,0,0)>, \forall t$.} $g_t(u,v)=(u,v, g^3_t(u,v), g^4_t(u,v))$, where $g^3_t(u,v)=  u^2+v^3+tv^2 +V_t(u,v)$ and $g^4_t(u,v)= uv + W_t(u,v)$, with $V_t$ and $W_t$  of third or higher order in $(u,v)$ for all $t$. 

The point $p_t$ being $2$-parallel to $q_t$, the germ of $M_t$ at $p_t$ is, after translation, of the general form $f_t(x,y)=(x,y,f_t^3(x,y), f_t^4(x,y))$, with $f_t^3$ and $f_t^4$ of second order in $(u,v)$ for all $t$. 

Thus, as before, $g_t(u,v)$ is weakly parallel to $p_t$ if the Jacobian of the map (\ref{FG}) vanishes at $(0,0,u,v)$ and this Jacobian  is the same as the Jacobian of the map $(g_t^3,g_t^4)$ at $(u,v)$, which is of the form $J(u,v,t)=2u^2-3v^3-2tv^2 + R_t(u,v)$, where $R_t$ is of third or higher order in $(u,v)$ for all $t$. We now apply the following lemma: 

\begin{lem} The Jacobian $J(u,v,t)=2u^2-3v^3-2tv^2 + R_t(u,v)$, with $R_t$ of third or higher order in $(u,v)$, $\forall t$, can be put for small $t$, by a smooth near-identity change of coordinates of the form $(u,v,t)\mapsto (U(u,v,t), V(u,v,t), t)$, to the normal form $H(U,V,t)=2U^2-3V^3-2tV^2(1+\phi(U,V,t))$, with $\phi$ a smooth function satisfying $\phi(0,0,t)=0$, for small $t$. \end{lem}   
\begin{proof} 
Start by writing  $R_t(u,v)=t\big(\psi_3(v,t)+u\psi_2(v,t)+ 2u^2\psi_1(u,v,t)\big)$, where $\psi_3$ is of order at least $3$ in $v$, $\forall t$,  $\psi_2$ is of order at least $2$ in $v$, $\forall t$, and $\psi_1(0,0,t)=0$ .  Then, $J(u,v;t)= 2u^2(1+t\psi_1(u,v,t))-3v^3(1-t\widetilde\psi_3(v,t)) -2tv^2(1-u\widetilde\psi_2(v,t))$, where $\widetilde\psi_3(v,t)=\psi_3(v,t)/3v^3$, $\widetilde\psi_2(v,t)=\psi_2(v,t)/2v^2$. Thus, setting $V(u,v,t)=V(v,t)=v\sqrt[3]{1-t\widetilde\psi_3(v,t)}$ and $U(u,v,t)=u\sqrt{1+t\psi_1(u,v,t)}$, we note that $(u,v,t)\mapsto(U,V,t)$ is a near-identity transformation for small $t$, therefore invertible, so that we can write $J(u,v,t)=H(U,V,t)=2U^2-3V^3-2tV^2(1+\phi(U,V,t))$, where $\phi$ is a smooth function satisfying $\phi(0,0,t)=0$, for small $t$. 
\end{proof}

It follows that, for small $t$ and in  a neighborhood of $(U,V)=(0,0)$, the curve $\mathcal C_{t=t_0}'$, which is obtained as the section $\{H(U,V,t=t_0)=0\}$,  is a small deformation of the curve $\mathcal C_{t=t_0}$, which is obtained as the section $\{h(u,v,t=t_0)=0\}$, where $h(u,v,t)=2u^2-3v^3-2tv^2$. In particular, for $t_0=0$ the curve $\mathcal C_{t=t_0}'$ is a cusp, just as $\mathcal C_{t=t_0}$, for  $t_0<0$ the curve $\mathcal C_{t=t_0}'$ has a smooth branch and an isolated point at $(0,0)$, just as $\mathcal C_{t=t_0}$, and for $t_0>0$ the curve $\mathcal C_{t=t_0}'$ is a looped curve with a transversal self-crossing at $(0,0)$, just like $\mathcal C_{t=t_0}$. 
 \end{proof}

\begin{rem}\label{famWp} In the same vein, when the embedding is fixed and $q=p$, if $s\in I$ is a parameter along a curve $p(s)\subset M$ such that  $p(0)$ is a parabolic point of $M\subset\mathbb R^4$, then by slightly adapting the above reasoning  we can easily see that the family of germs $\mathcal W_p^p(s)$ is described by the Whitney umbrella (\ref{wu}), just renaming  $t\mapsto s, \ (u,v)\mapsto (x,y)$. \end{rem}

\begin{rem} As a last remark, we note that two distinct points $q,q'\in \mathcal W_p$ need not be weakly parallel to each other. For instance, if $[\pi_1]=G(p)=[\bf e_1\wedge e_2]$, we may have that $G(q)=[\bf e_1\wedge e_3]$ and $G(q')=[\bf e_2\wedge e_4]$. We also note that, if $(p,q)$ is a strongly parallel pair ($p\neq q$), the local geometry of $p$ and $q$ can be distinct (one elliptic, the other hyperbolic, etc), thus in general ${\mathcal W}_p^{q}$ and ${\mathcal W}_q^{p}$ can be of distinct types.
 \end{rem}

 \subsection{Illustrations}

 We now provide examples of Theorem \ref{weaklyset} and Proposition \ref{pwf}, this latter in the form of Remark \ref{famWp}.

\begin{ex}\label{ex1}
Let us consider the following embedding of a torus into the affine space $\mathbb R^4$(\cite{GMRR}),
$F(x,y)=(f_1(x,y),f_2(x,y),f_3(x,y),f_4(x,y))$, 
$$f_1(x,y)=\cos(x) \left(1-\frac{\cos(y)}{10}\right)+\frac{1}{10} \sin(x) \sin(y),$$
$$f_2(x,y)=\left(1-\frac{\cos(y)}{10}\right) \sin(x)-\frac{1}{10} \cos(x) \sin(y),$$
$$f_3(x,y)=\cos(2x) \left(1-\frac{2 \cos(y)}{5}\right)+\frac{4}{5} \sin(2x) \sin(y),$$
$$f_4(x,y)=\left(1-\frac{2 \cos(y)}{5}\right) \sin(2x)-\frac{4}{5} \cos(2x) \sin(y).$$
The curves of parabolic points on this torus are given by
$$y=\pm 2 \arctan\left(\sqrt{\frac{1}{5} \left(-4+\sqrt{41}\right)}\right).$$


Fig. 1 presnts the curve of weakly parallel points on the $x,y$-plane to a hyperbolic point $(\pi,\pi)$ (or elliptic point $(0,0)$). All points marked by black dots on Fig. 1 are strongly parallel. Elliptic points $(0,0)$ and $(\pi,0)$ are isolated points of the the curve. There are transversal self-intersections of the curve in hyperbolic points $(0,\pi)$ and $(\pi,\pi)$.
\end{ex}


 \begin{center}
 \includegraphics{hyperbolic-eliptic1_gr1.eps}

  {\small {\bf Figure 1.}
  Set of weakly parallel points to an elliptic or hyperbolic point. }
\end{center}

\begin{ex} Let us again consider the torus from Example \ref{ex1}. In Figures  2 to 4 we preset the bifurcation of $\mathcal W_p^p$ - the germ at a point $p$  of the curve of weakly parallel points to $p$ - when we change $p$ from a hyperbolic point to a parabolic point and then to an elliptic point.
For $p$ we chose a point with the following  coordinates on the $(x,y)$-plane: 
$$\left(s,2 \arctan\left(\sqrt{\frac{1}{5} \left(-4+\sqrt{41}\right)}\right)+s\right)$$

 For $s=0$ the point $p$ is parabolic and at this parabolic point (marked by a black dot) the curve has a cusp singularity, cf. Fig. 3, which also shows  the curve of weakly parallel points to this parabolic point. 
 
 For sufficiently small positive $s$ the point $p$ is hyperbolic (cf. Fig. 2) and for sufficiently small negative $s$ the point $p$ is elliptic (cf. Fig 4). 
 The dotted lines on Figs. 2-4 are lines of parabolic points. From the figures we see that the bifurcation of the set $\mathcal W_p^p$ when we change $s$ is diffeomorphic to the Whithney umbrella, which is presented on Fig. 5. 
\end{ex}

\begin{center}
\includegraphics{bifurcation_gr2.eps}

{\small {\bf Figure 2.} Set of weakly parallel points to a hyperbolic point (s=0.085). }
\end{center}

\begin{center}
\includegraphics{bifurcation_gr1.eps}

{\small {\bf Figure 3.} Set of weakly parallel points to a parabolic point (s=0). }
\end{center}

\begin{center}
\includegraphics{bifurcation_gr3.eps}

{\small {\bf Figure 4.} Set of weakly parallel points to an elliptic point $(s=-0.085)$. }
\end{center}

\begin{center}
 \includegraphics{Whitney-umbr.eps}

{\small {\bf Figure 5.} The bifurcation of the germ, at a parabolic point $p$, of the set of weakly parallel points to $p$. }
\end{center}

\end{document}